\def\red#1{{\color{red}#1}}
\theoremstyle{plain}
\newtheorem{thm}{\bf Theorem}[section]
\newtheorem{prop}[thm]{\bf Proposition}
\newtheorem{lem}[thm]{\bf Lemma}
\newtheorem{cor}[thm]{\bf Corollary}
\theoremstyle{definition}
\newtheorem{defn}[thm]{\bf Definition}
\theoremstyle{remark}
\newtheorem{rem}[thm]{\bf Remark}
\newtheorem{quest}[thm]{\bf Question}
\newtheorem{note}[thm]{\bf Note}
\DeclareMathOperator{\reg}{reg}
\DeclareMathOperator{\Tor}{Tor}
\DeclareMathOperator{\lcm}{lcm}
\def\NN{\mathbb{N}}
\def\ZZ{\mathbb{Z}}
\newsavebox\foobox
\begin{document}

\title{\textbf{Rees algebra of Maximal Order Pfaffians and its diagonal subalgebras}}

\author[Neeraj Kumar]{Neeraj Kumar}
\address{Department of Mathematics, IIT Hyderabad, Kandi, Sangareddy - 502285}
\email{neeraj@math.iith.ac.in}

\author[Chitra Venugopal]{Chitra Venugopal}
\address{Department of Mathematics, IIT Hyderabad, Kandi, Sangareddy - 502285}
\email{ma19resch11002@iith.ac.in}

\subjclass[2020]{{Primary 13C40, 13D02}; Secondary {13H10} } 
\keywords{Pfaffians, Koszul algebra, Cohen-Macaulay, Diagonal Subalgebra}

\date{\today}

\begin{abstract}
Given a skew-symmetric matrix $X$, the Pfaffian of $X$ is defined as the square root of the determinant of $X$. In this article, we give the explicit defining equations of the Rees algebra of a Pfaffian ideal $I$ generated by the maximal order Pfaffians of a generic skew-symmetric matrix. 
We further prove that all diagonal subalgebras of the corresponding Rees algebra of $I$ are Koszul. We also look at Rees algebras of Pfaffian ideals of linear type associated with certain sparse skew-symmetric matrices. In particular, we consider the tridiagonal matrices and identify the corresponding Pfaffian ideals to be of Gr\"obner linear type and as the vertex cover ideals of unmixed bipartite graphs. As an application of our results, we conclude that all their ordinary and symbolic powers have linear quotients. 
\end{abstract}

\maketitle

\section*{Introduction}
Let $A$ be a graded Noetherian ring and $I$ be a homogeneous ideal of $A$. Then the Rees algebra of $I$ denoted by $\mathcal{R}(I)$ is a bigraded algebra defined as $\oplus_{i \geq 0}I^{i}$. The Rees algebra of an ideal $I$ forms an important class of bigraded algebra, which contains a great deal of information about the powers of the
ideal $I$. Moreover, geometrically it corresponds to the blowup
of Spec$(A)$ along the variety of $I$.
In general, corresponding to a homogeneous ideal $I$ of a ring $A$, finding the explicit defining equations of the Rees algebra is not easy. Some study has been done for certain classes of ideals like perfect ideals of grade $2$ \cite{Morey, MoreyUlrich, Rees_linear_presented,HerSimVas}, perfect Gorenstein ideals of grade $3$ \cite{KPU,Morey}, determinantal ideals \cite{BCV1,BCV2} etc. 
An ideal $I$ of a ring $A$ is said to be of \textit{linear type} if the Rees algebra of $I$ is isomorphic to its symmetric algebra. We are interested in the study of Rees algebras of ideals generated by $d$-sequences (a notion introduced by Huneke in \cite{CH,CH1}) as they form a class of ideals of linear type. The motivation to explore the Rees algebra corresponding to a $d$-sequence comes from the analogous study in \cite{CHTV,SimisTrungValla,CC, Neeraj} for an ideal generated by a regular sequence. 

\vspace{2mm}

In this article, we look at a particular class of ideals called the Pfaffian ideals, which come corresponding to skew-symmetric matrices. Let $X$ be a skew-symmetric matrix, and let det $X$ denote its determinant.
Then \textit{Pfaffian of $X$} denoted by $\text{Pf}(X)$ is defined as the square root of det $X$ $i.e.,$ $\text{Pf}(X)^2=\text{det } X$ (cf.  \cite{Artin}). The \textit{Pfaffian ideal of X} denoted by $\text{Pf}_{n-1}(X)$ is the ideal obtained by considering the Pfaffians of submatrices of order $n-1$ obtained by deleting a row and the corresponding column of the matrix $X$ (cf. \cite{DBDE}). In \cite{DBDE}, Buchsbaum and Eisenbud proved that every Gorenstein ideal of codimension $3$ in a commutative Noetherian ring $A$ can be identified as the ideal of Pfaffians of order $(n-1)$ of some $n \times n$ alternating matrix of rank $n-1$.
Under some assumptions on the entries of the skew-symmetric matrices, Pfaffian ideals are found to be of linear type (cf. \cite{Baetica}, \cite{Price&Cooper}). 
We attempt to study the diagonal subalgebras of the corresponding Rees algebras of such Pfaffian ideals.

\vspace{2mm}

The notion of diagonal subalgebras was introduced by Simis, Trung and Valla in \cite{SimisTrungValla}, generalizing the concept of the Segre product of graded algebras. The diagonal subalgebras of certain classes of equigenerated homogeneous ideals of a standard graded polynomial ring can be viewed as the homogeneous coordinate rings of some rational varieties embedded in projective spaces (cf. \cite{SimisTrungValla}). It is also known that 
for $c,e \in \NN$ and a homogeneous ideal $I$ of $B=K[x_1, \ldots,x_n]$, if $I^e$ is generated by forms of degree $d \leq c-1$, then $K[(I^e)_c]$ can be identified as a diagonal subalgebra of the Rees algebra in a natural way \cite{SimisTrungValla,CHTV}.
One of the key challenges in the study of diagonal subalgebras is to find suitable conditions on a bigraded algebra $R$ such that certain algebraic properties of $R$ are inherited by $R_{\Delta}$ (Definition \ref{diagonal_sub}).

\vspace{2mm}

We are interested in the Koszulness and Cohen-Macaulay property of the diagonals of Rees algebras of equigenerated homogeneous ideals. There is much literature on these properties of diagonals of bigraded algebras (cf. \cite{SimisTrungValla,CHTV,Olga,Neeraj,AKM}). \vspace{2mm}

A standard graded $K$-algebra $A$ is {\emph{Koszul}} if the non-zero entries of the matrices representing the maps in the minimal free resolution of $K$ are homogeneous of degree $1$. Several articles have discussed the Koszul property of the diagonal subalgebras of Rees algebras of ideals (cf. \cite{CHTV, CC,Fr,SB,Neeraj,AKM}.  Explicit lower bounds are known for the residual intersections (\cite{AKM}) and when the ideals are complete intersections \cite{CHTV,Neeraj}. More generally, in \cite{CHTV}, it has been proved that for any standard bigraded $K$-algebra $R$, $R_{\Delta}$ is Koszul for $c,e \gg 0$.  

\vspace{2mm}

In \cite{SimisTrungValla} for $\Delta=(1,1)$, the authors discuss some classes of ideals for which the $\Delta$-diagonal of the corresponding Rees algebras are Cohen-Macaulay. Complete intersections and certain classes of straightening closed ideals in algebras with straightening law (like the determinantal ideals generated by the maximal minors of generic matrices) are some of the ideals looked at.
In \cite{CHTV}, Conca, Herzog Trung and Valla solve an open problem posed in \cite{SimisTrungValla} regarding the conditions on $(c,e)$, which guarantees Cohen-Macaulay property of $K[(I^e)_c]$  when $I \subset B$ is a homogeneous complete intersection minimally generated by $r$ forms of degree $d_1, \ldots, d_r$. 
For some classes of perfect ideals of height two as well, certain bounds on $c,e$ are known, for which the diagonals of the corresponding Rees algebras are Cohen-Macaulay (cf. \cite{AKM}).
In general, it is known that if a standard bigraded ring $R$ is Cohen-Macaulay, then $R_{\Delta}$ is Cohen-Macaulay for large integers $c \gg e>0$ (cf. \cite{Olga}). 

\vspace{2mm}

In this article, we primarily look at the equigenerated Pfaffian ideals so that the associated Rees algebras are standard
bigraded, thus forming standard graded K-algebras with respect to the total degree. In particular, it makes sense to look at the Koszul property of such graded K-algebras. 

\vspace{2mm}

Some of the important results discussed in this article are the following. 
\begin{enumerate}[(i)]
    \item Let $I=$Pf$_{n-1}(X)$ where $X$ is a generic skew-symmetric matrix of odd order $n=2r+1$, $r \in \NN$ (defined in Section \ref{Preliminaries}). Then we prove the following.
    \begin{enumerate}[(a)]
        \item  The $c$-th Veronese subalgebra of the corresponding Pfaffian ring is Koszul for $c \geq n/4.$
        \item  $\mathcal{R}(I)$ is generated by quadrics but need not be Koszul always.
        \item  All the diagonals of $\mathcal{R}(I)$ are Koszul.
    \end{enumerate}\vspace{2mm}
\item For the Pfaffian ideal $I$ coming from a tridiagonal matrix of the form mentioned in Theorem \ref{tridiagonal}, $\mathcal{R}(I)_{\Delta}$ is Koszul and Cohen-Macaulay for all $\Delta$. In this case, Pf$_{n-1}(X)$ is an equigenerated monomial ideal of Gr\"obner linear type (defined in Section \ref{sparse_skew}), which can be identified as the vertex cover ideal of an unmixed bipartite graph, thereby giving information about its ordinary and symbolic powers having linear quotients. \vspace{2mm}
\item For $I=$Pf$_{n-1}(X)$ where $X$ has the form given in Proposition \ref{pf_case2}, $\mathcal{R}(I)_{\Delta}$ is Koszul and Cohen-Macaulay for all $\Delta$.  

\end{enumerate}

\vspace{1mm}

We mainly focus on the maximal order Pfaffians, as in the non-maximal case, the defining ideals of Rees algebras are not necessarily generated by quadrics, a property essential for studying the Koszulness of Rees algebras.

\vspace{2mm}

The reader may be familiar with some of the observations made in this article. However, to maintain the study self-contained, we reproduce some arguments and independently establish the results. All the computations in this article are done using Macaulay2 (\cite{M2}). 

\vspace{2mm}

\section{Preliminaries} \label{Preliminaries}

\vspace{2mm}

We consider $K$ to be a field of characteristic zero throughout the article. Consider the skew-symmetric matrix $X= \begin{bmatrix}
                                        0      & x_{12} & \ldots & x_{1\;n}\\
                                        -x_{12} &   0    & \ldots & x_{2\;n}\\
                                        \vdots & \vdots & \ddots & \vdots\\
                                        -x_{1\;n} & -x_{2\;n} & \ldots &   0   \\
                                       \end{bmatrix}$
of odd order $n=2r+1$, $r \in \NN \cup \{ 0\}$, where the entries $x_{ij}$ for $i=1, \ldots, n-1$, $j=2, \ldots,n$
are indeterminates. This is the form of a generic skew-symmetric matrix of odd order. Let $B=K[X]$ where $K[X]$ is the polynomial ring in indeterminates being the non-zero entries of $X$ and $I=\text{Pf}_{n-1}(X)=\text{Pf}_{2r}(X).$ Then $B/I$ is said to be a \textit{Pfaffian ring}. 

\begin{note}
Let $X$ be a skew-symmetric matrix of odd order. Then by Pf$_{\bar{\ell}}(X)$ we mean the Pfaffian of the submatrix of $X$ obtained by removing its $\ell\text{-th}$ row and the $\ell\text{-th}$ column.
\end{note}

\vspace{1mm}

For a field $K$, $A$ is said to be a standard graded $K$-algebra if $A=\oplus_{i \geq 0} A_i$ such that $A_0=K$,  $A_1$ is a finite dimensional $K$-vector space and $A_iA_j=A_{i+j}$ for every $i,j\in \NN \cup \{0\}$. 
Any standard graded $K$-algebra $A$ can be identified as $B/I$, where $B$ is a standard graded polynomial ring over $K$ ($K$-algebra) and $I$ is its homogeneous ideal. For an equigenerated ideal $\mathfrak{I}$ of $A$, the \textit{Rees algebra of $\mathfrak{I}$ in $A$} is a standard bigraded $K$-algebra defined as $\mathcal{R}(\mathfrak{I})= \bigoplus_{n\geq0} {\mathfrak{I}}^n$, where standard bigraded means $\mathcal{R}(\mathfrak{I})_1=\oplus_{i\geq 0} \mathcal{R}(\mathfrak{I})_{(i,0)}$ and $\mathcal{R}(\mathfrak{I})_2=\oplus_{i\geq 0} \mathcal{R}(\mathfrak{I})_{(0,i)}$ are standard graded $K$-algebras.
Similar to the graded case, $\mathcal{R}(\mathfrak{I})$ has a presentation of the form $S/J $ as a quotient of the standard bigraded polynomial ring $S=K[X,Y]$ over a field $K$ with the degree of the variables in $X$ being $(1,0)$ and that in $Y$ being $(0,1)$, by a bihomogeneous ideal $J$. An ideal $\mathfrak{I}$ of a ring $A$ is of linear type if the defining relations of  $\mathcal{R}(\mathfrak{I})$ are linear in the indeterminates $Y$.

\vspace{2mm}

\begin{defn} \label{diagonal_sub}
For two integers $c,e \geq 0$ with $(c,e) \neq (0,0)$, the $(c,e)$-diagonal is $\Delta= \{(cs,es):s \in \ZZ \}$ of $\ZZ^2$. The diagonal subalgebra of a bigraded algebra $R$ along $\Delta$ is defined as the graded algebra $R_{\Delta}:=\bigoplus_{i \geq 0} R_{(ci,ei)}$ (cf. \cite{SimisTrungValla}). 
\end{defn}

It is analogous to the notion of Veronese subalgebras of graded algebras. For $c \in \NN$ and a standard graded $K$-algebra $A$, $A^{(c)}=\oplus_{j \in \NN}A_{cj}$ is defined as the $c$-th Veronese subalgebra of $A$. 

\vspace{2mm}

In \cite{SimisTrungValla}, the authors have given the presentation of a diagonal subalgebra of a bigraded algebra $R$ in the following way. 
Consider $R \cong S/J$, where $S=K[x_1,\ldots,x_n,y_1, \ldots ,y_m]$ is a bigraded polynomial ring and $J$ a bihomogeneous ideal of $S$. Then 
\begin{equation} \label{presentation}
R_{\Delta} =S_{\Delta}/J_{\Delta}, 
\end{equation}
where $S_{\Delta}$ is the Segre product of $K[x_1, \ldots, x_n]^{(c)}$ with $K[y_1, \ldots, y_m]^{(e)}$ and  $J_{\Delta}=\oplus_{i \geq 0}J_{(ci,ei)}$.

\vspace{1mm}

Now let $\Delta=(1,1)$. Then $S_{\Delta}= K[x_iy_j| \, 1 \leq i \leq n, \, 1 \leq j \leq m]$ and a presentation of $S_{\Delta}$ can be seen as $S_{\Delta} \cong K[T]/I_2(T)$ where $T=(t_{ij})$ is an $n \times m$ matrix of indeterminates and $I_2(T)$ is the ideal generated by the $2$-minors of $T$. This presentation is obtained by mapping $t_{ij}$ in $K[T]$ to $x_iy_j$ in $S_{\Delta}$. The following lemma gives the form for the generators of $J_{\Delta}$ in $K[T]$ when $\Delta=(1,1)$.

\vspace{1mm}

\begin{lem} (\cite[Lemma 2.1]{SimisTrungValla}) \label{present2}
Let $R \cong S/J$ be a standard bigraded $K$-algebra and $J$ a bihomogeneous ideal of $S$ generated by $g_1, \ldots, g_r$ with deg $g_i=(a_i,b_i)$. Then for $c_i=\text{max} \{a_i,b_i\}$, the generators of $J_{\Delta}$ has the form $g_im$ where $m$ is a monomial of degree $(c_i-a_i,c_i-b_i)$, $i=1, \ldots, r$.     
\end{lem}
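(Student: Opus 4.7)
The plan is to establish two inclusions: every element of the proposed form lies in $J_\Delta$, and conversely $J_\Delta$ is generated as an $S_\Delta$-ideal by such elements. The first inclusion is immediate: if $m$ is a monomial of bidegree $(c_i - a_i, c_i - b_i)$, then $g_i m \in J$ and has bidegree $(c_i, c_i)$, so after the identification of balanced-bidegree monomials of $S$ with elements of $S_\Delta$, we have $g_i m \in J \cap S_\Delta \subseteq J_\Delta$.

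For the reverse inclusion, I would first reduce to monomial witnesses. An arbitrary $f \in J_{(\ell, \ell)}$ may be written as $f = \sum_i g_i h_i$ with each $h_i$ bihomogeneous of bidegree $(\ell - a_i, \ell - b_i)$; expanding each $h_i$ in the monomial basis reduces the task to showing that $g_i \mu$ lies in the $S_\Delta$-ideal generated by the proposed generators, for every monomial $\mu$ of bidegree $(\ell - a_i, \ell - b_i)$. Note that such a $\mu$ can exist only when $\ell \geq c_i$, so this constraint is automatic from the outset.

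The key factorization step: assume without loss of generality that $a_i \leq b_i$, so $c_i = b_i$ and the required monomial $m$ has bidegree $(b_i - a_i, 0)$, i.e., $m$ is a monomial purely in the $x$-variables. Since $\mu$ has $x$-degree $\ell - a_i \geq b_i - a_i$, one can choose such an $m$ dividing $\mu$ and write $\mu = m \mu'$, where $\mu'$ has balanced bidegree $(\ell - b_i, \ell - b_i) = (\ell - c_i, \ell - c_i)$. The substantive observation is then that any monomial of balanced bidegree $(d, d)$ in $S$ can be factored as a product of $d$ monomials of the form $x_p y_q$, simply by arbitrarily pairing its $x$-factors with its $y$-factors; such a monomial therefore lies in $S_\Delta$. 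Consequently $g_i \mu = (g_i m) \cdot \mu'$ exhibits $g_i \mu$ as an element of the desired $S_\Delta$-ideal, completing the argument.

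The main obstacle is essentially this pairing observation, which identifies balanced-bidegree monomials in $S$ with products of Segre generators in $S_\Delta$. Once that is in hand, the remainder is a divisibility check using $\ell \geq c_i$, and the symmetric case $b_i \leq a_i$ is handled identically with the roles of the $x$- and $y$-variables exchanged.
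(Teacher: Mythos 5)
The paper does not prove this lemma at all --- it is quoted verbatim from Simis--Trung--Valla \cite[Lemma 2.1]{SimisTrungValla} and used as a black box, so there is no in-paper argument to compare yours against. Your proof is correct and is essentially the standard one: the graded decomposition $f=\sum_i g_ih_i$ with $h_i$ of bidegree $(\ell-a_i,\ell-b_i)$, the extraction of a monomial $m$ of bidegree $(c_i-a_i,c_i-b_i)$ dividing each monomial $\mu$ of $h_i$, and the observation that the balanced cofactor $\mu'$ of bidegree $(\ell-c_i,\ell-c_i)$ factors into Segre generators $x_py_q$ and hence lies in $S_\Delta$. The only point worth making explicit is that the statement is intended for $\Delta=(1,1)$ (as the surrounding discussion in the paper indicates), which is exactly the setting your balanced-bidegree pairing argument uses; for a general diagonal $(c,e)$ the same scheme works but the bookkeeping for $m$ and for membership of $\mu'$ in $S_\Delta$ changes.
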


For an equigenerated ideal of a standard graded polynomial ring, if $c \geq ed+1$, then the dimension of the corresponding diagonal subalgebra of the Rees algebra is found to be independent of the diagonal.

\vspace{1mm}

\begin{lem} (\cite[Lemma 1.3(ii)]{CHTV}) \label{dimension}
Let $I$ be an equigenerated ideal of a standard graded polynomial ring in $n$ indeterminates with the degree of the generators denoted by $d$. If $c \geq ed+1$, then the $\dim \, \mathcal{R}(I)_{\Delta}=n$ where $\Delta=(c,e)$.
\end{lem}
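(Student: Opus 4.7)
My plan is to prove $\dim \mathcal{R}(I)_\Delta = n$ by establishing the two inequalities separately. Write $B = K[x_1, \ldots, x_n]$ and realise $\mathcal{R}(I) = \bigoplus_{j \geq 0} I^j t^j \subset B[t]$, so that $\mathcal{R}(I)_\Delta = \bigoplus_{s \geq 0} (I^{es})_{cs}\, t^{es}$.

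For the upper bound $\dim \mathcal{R}(I)_\Delta \leq n$, I would define the ``drop $t$'' map $\phi \colon \mathcal{R}(I)_\Delta \to B$ by $f t^{es} \mapsto f$, and observe that it is a ring homomorphism: multiplication in $\mathcal{R}(I)$ is simply polynomial multiplication in $B[t]$, so $t$ acts as a bookkeeping symbol. The key point is that $\phi$ is injective, because a nonzero element in bidegree $(cs, es)$ lands in $B_{cs}$, and distinct homogeneous components of $B$ intersect only in zero. This embeds $\mathcal{R}(I)_\Delta$ as a subring of $B$, and hence $\dim \mathcal{R}(I)_\Delta \leq \dim B = n$. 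Notice this half does not require the hypothesis $c \geq ed + 1$.

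For the lower bound $\dim \mathcal{R}(I)_\Delta \geq n$, the hypothesis enters. I would fix any nonzero $g \in I_d$ and set $u_i := x_i^{c-ed}\, g^e\, t^e$ for $i = 1, \ldots, n$. Since $c - ed \geq 1$, each $u_i$ is a nontrivial element of $(I^e)_c\, t^e = \mathcal{R}(I)_{\Delta, 1}$. To show that $u_1, \ldots, u_n$ are algebraically independent over $K$, I would plug them into an arbitrary polynomial relation $P(Y_1, \ldots, Y_n) = \sum_a c_a Y_1^{a_1} \cdots Y_n^{a_n}$ and group the result by total $Y$-degree $m = |a|$:
$$P(u_1, \ldots, u_n) = \sum_{m \geq 0} \Bigl( \sum_{|a| = m} c_a\, x_1^{(c-ed)a_1} \cdots x_n^{(c-ed)a_n} \Bigr)\, g^{em}\, t^{em}.$$
Terms with distinct $m$ live in distinct bidegrees of $\mathcal{R}(I)$, so each bracketed sum must vanish; since $B$ is a domain and $g \neq 0$, and since the monomials $x_1^{(c-ed) a_1} \cdots x_n^{(c-ed) a_n}$ are pairwise distinct as $a$ ranges over tuples with $|a| = m$, one concludes $c_a = 0$ for every $a$. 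Combining the two bounds gives $\dim \mathcal{R}(I)_\Delta = n$.

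The main obstacle is conceptual rather than technical: one must see that the map $\phi$ forgetting $t^{es}$ is a \emph{genuine} injective ring homomorphism (which works precisely because the graded structure keeps the various bidegrees apart inside $B$), and then spot the right template $u_i = x_i^{c-ed}\, g^e\, t^e$, which converts the inequality $c \geq ed + 1$ directly into the positive exponents that let the $n$ variables $x_1, \ldots, x_n$ survive as a transcendence basis inside $\mathcal{R}(I)_\Delta$.
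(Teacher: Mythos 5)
Your argument is correct. Note, however, that the paper does not prove this statement at all: it is quoted verbatim from \cite[Lemma~1.3(ii)]{CHTV}, so there is no internal proof to compare against. What you have written is a valid self-contained proof in the spirit of the original. Both halves check out: the evaluation-at-$t=1$ map is injective because $c\ge ed+1\ge 1$ forces the components $f_s\in B_{cs}$ to sit in pairwise distinct graded pieces of $B$, and the elements $u_i=x_i^{c-ed}g^e t^e$ are algebraically independent because the exponent vectors $(c-ed)a$ are distinct for distinct $a$ and $B$ is a domain. Two small points worth making explicit. First, for the upper bound you should say why a subring has smaller dimension: Krull dimension is not monotone under subrings in general, but here $\mathcal{R}(I)_\Delta$ is an affine domain over $K$ (it is generated in degree one by the finite-dimensional space $(I^e)_c$, using $(I^{ei})_{ci}=((I^e)_c)^i$ for $c\ge ed$), so its dimension equals the transcendence degree of its fraction field over $K$, which is bounded by $\operatorname{trdeg}_K\operatorname{Frac}(B)=n$. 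Second, the lower bound implicitly assumes $I\neq 0$ so that a nonzero $g\in I_d$ exists; this is harmless but should be stated.
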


\vspace{1mm}

Let $A$ be a standard graded $K$-algebra and $M$ be a finitely generated $A$-module. Let $t_i^A(M)= \sup \{j: \Tor_i^A(M,K)_{j} \neq 0  \}$ with $t_{i}^A(M)=- \infty$ if $\Tor_i^A(M,K)_{j}=0$ for all $j \geq 0$. Then the regularity of $M$ denoted by reg$_A(M)$ is defined as $\reg_A(M)=\sup \{ t_{i}^A(M) -i, \, i \geq 0\}$. Similarly, corresponding to a standard bigraded $K$-algebra, there is an analogous notion of $x$-regularity and $y$-regularity (refer \cite[Section 2]{ACN} for definitions).

\vspace{2mm}

Let $M$ be an $A$-module generated by elements of the same degree, say $d$. Then $M$ is said to have a \textit{$d$-linear resolution} over $A$ if reg$_A(M)=d$. If the degree $d$ of the generators of an $A$-module $M$ is clear from the context, then just the terminology 'linear resolution' is used. A standard graded $K$-algebra $A$ is \textit{Koszul} if the minimal $A$-free resolution of the residue field is linear, that is, reg$_A(K)=0$. 

\vspace{2mm}

\begin{rem} \label{Koszulness_linear}
We recall the following results related to the Koszulness of bigraded algebras and the linear resolutions of ideals for later use.  

\begin{itemize}
    \item[(i)] Let $R$ be a standard bigraded $K$-algebra with the free modules in its minimal bigraded $S$-free resolution being denoted by $F_i=\oplus_{(a,b) \in \NN^2}S(-a,-b)^{\beta_{i,a,b}}$. Then for $\Delta=(c,e)$, $R_{\Delta}$ is Koszul if max$\{ \dfrac{a}{c},\dfrac{b}{e}: \beta_{i,a,b} \neq 0 \} \leq i+1$ for every $i$ (\cite[Theorem 6.2]{CHTV}). This shows how information about the shifts in the $S$-free resolution of $\mathcal{R}(I)$ helps in obtaining the lower bounds for $(c,e)$ for which $\mathcal{R}(I)_{\Delta}$ is  Koszul.
    \item[(ii)] If $R$ is a Koszul bigraded $K$-algebra then $R_{\Delta}$ is Koszul for all $\Delta$ (\cite[Theorem 2.1]{SB}).
    \item[(iii)] Let $I$ be an equigenerated ideal of a standard graded ring $A$. If $\mathcal{R}(I)$ is Koszul then $I^n$ has a linear resolution for all $n \geq 0$ (\cite[Corollary 3.6]{SB}).
    
\end{itemize}
\end{rem}

\vspace{3mm}
Similarly, there are results which help in getting bounds for $\Delta$ from the $S$-free resolution of $\mathcal{R}(I)$ such that $\mathcal{R}(I)_{\Delta}$ is Cohen-Macaulay.

\begin{lem} \cite[Lemma 3.10]{CHTV} \label{CM_criteria}
Let $S=K[x_1, \ldots x_n, y_1, \ldots, y_m]$ be a standard bigraded polynomial ring. Then for $\Delta=(c,e)$ and $a,b \in \ZZ$, 

\begin{itemize}
    \item[(i)] $\dim$ $ S(-a,-b)_{\Delta}=n+m-1$.
    \item[(ii)] If $0<b<m$ or $0<a<n$, then $S(-a,-b)_{\Delta}$ is Cohen-Macaulay if $c> \max \{-a,-n+a \}$ and $e> \max \{ -b,-m+b \}$. 
\end{itemize}
\end{lem}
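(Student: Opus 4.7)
I would attack both parts through a Hilbert-function and local-cohomology analysis of $S(-a,-b)_{\Delta}$ viewed as a finitely generated module over the Segre--Veronese subring $S_{\Delta}$. Part (i) is a direct dimension count: the $i$-th graded piece of $S(-a,-b)_{\Delta}$ is $S_{(ci-a,ei-b)}$, and for $i\gg 0$ this vector space has dimension $\binom{ci-a+n-1}{n-1}\binom{ei-b+m-1}{m-1}$, a polynomial in $i$ of degree $(n-1)+(m-1)=n+m-2$. Consequently the Krull dimension of $S(-a,-b)_{\Delta}$ as an $S_{\Delta}$-module equals $n+m-1$.

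For part (ii) I plan to use local cohomology: $S(-a,-b)_{\Delta}$ is Cohen--Macaulay iff $H^{k}_{\mathfrak{m}_{\Delta}}(S(-a,-b)_{\Delta})=0$ for every $k<n+m-1$. The crucial identification is $\mathrm{Proj}(S_{\Delta})\cong\PP^{n-1}\times\PP^{m-1}$ embedded Segre--Veronese style by $\mathcal{O}(c,e)$, so that the sheaf associated to $S(-a,-b)_{\Delta}$ is $\mathcal{O}(-a,-b)$ and the Serre--Grothendieck correspondence gives
\[
H^{k}_{\mathfrak{m}_{\Delta}}(S(-a,-b)_{\Delta})_{j} \;\cong\; H^{k-1}\bigl(\PP^{n-1}\times\PP^{m-1},\,\mathcal{O}(cj-a,\,ej-b)\bigr)\qquad(k\geq 2),
\]
together with the standard four-term sequence handling $k=0,1$. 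The $k=0$ piece vanishes since $S$ is a domain; for $k=1$ one checks that $S(-a,-b)_{\Delta}$ and $\bigoplus_{j\geq 0}H^{0}(\PP^{n-1}\times\PP^{m-1},\mathcal{O}(cj-a,ej-b))$ coincide term by term with $S_{(cj-a,ej-b)}$ (declared zero in negative bidegrees), so the comparison map is an isomorphism.

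The decisive step is killing the intermediate sheaf cohomology. By K\"unneth, for $1\leq l\leq n+m-3$ the group $H^{l}(\PP^{n-1}\times\PP^{m-1},\mathcal{O}(p,q))$ can only be nonzero through the summand $H^{n-1}(\PP^{n-1},\mathcal{O}(p))\otimes H^{0}(\PP^{m-1},\mathcal{O}(q))$ (requiring $p\leq -n$, $q\geq 0$) or $H^{0}(\PP^{n-1},\mathcal{O}(p))\otimes H^{m-1}(\PP^{m-1},\mathcal{O}(q))$ (requiring $p\geq 0$, $q\leq -m$). Setting $(p,q)=(cj-a,ej-b)$, the first case forces $cj\leq a-n$ and $ej\geq b$; the conditions $c>a-n$ and $e>-b$ squeeze any integer $j$ into $j<1$ and $j>-1$, hence $j=0$, which then demands $a\geq n$ and $b\leq 0$. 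The second case analogously pins $j=0$ and forces $a\leq 0$ and $b\geq m$. Both ``corner'' configurations are ruled out by the hypothesis ``$0<a<n$ or $0<b<m$'', finishing the argument. The place I expect to require the most care is exactly this $j=0$ reduction: the strict inequalities in $c>\max\{-a,a-n\}$ and $e>\max\{-b,b-m\}$ must eliminate every nonzero integer $j$ regardless of the signs of $a$ and $b$, and the disjunctive hypothesis on $(a,b)$ must be shown to kill both remaining corners simultaneously.
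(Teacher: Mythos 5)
The paper gives no argument for this lemma---it is quoted directly from [CHTV, Lemma 3.10]---so the comparison is with the source. Your proof is correct and is, in substance, the same computation: CHTV obtain the local cohomology of $S(-a,-b)_{\Delta}$, viewed as the Segre product of the shifted Veronese modules $K[x](-a)^{(c)}$ and $K[y](-b)^{(e)}$, from the Goto--Watanabe K\"unneth formula, which is exactly the algebraic counterpart of your Serre--Grothendieck correspondence plus K\"unneth decomposition on $\PP^{n-1}\times\PP^{m-1}$; your reduction pinning the offending twist to $j=0$ and then excluding the two corner configurations $(a\geq n,\ b\leq 0)$ and $(a\leq 0,\ b\geq m)$ via the disjunctive hypothesis is the same bookkeeping. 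Two minor caveats. First, both parts tacitly require $c,e\geq 1$ (the paper's Definition \ref{diagonal_sub} allows a zero entry, and for $e=0$ the Hilbert polynomial in (i) has degree $n-1$, not $n+m-2$). Second, your term-by-term identification of $M_j$ with $H^0\bigl(\PP^{n-1}\times\PP^{m-1},\mathcal{O}(cj-a,ej-b)\bigr)$, used to kill $H^0_{\mathfrak m}$ and $H^1_{\mathfrak m}$, relies on $H^0(\PP^{m-1},\mathcal{O}(q))=0$ for $q<0$ and therefore fails as stated when $m=1$ (or $n=1$), since $\PP^0$ is a point and its $H^0$ never vanishes; that degenerate case must be argued separately using the hypothesis $0<a<n$, though it never arises in this paper's applications, where $n,m\geq 2$.
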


\vspace{2mm}

\vspace{2mm}

The following result in \cite{D.Taylor} gives the defining relations (not necessarily minimal) of the Rees algebra of a monomial ideal.

\vspace{1mm}

Let $B=K[x_1, \ldots, x_n]$ and $I= \langle u_1, \ldots, u_s \rangle$
be a monomial ideal in $B$. Let $I_r$ denote the set of all sequences $\alpha=(i_1, \ldots, i_r)$ in $[s]$ of length $r$ such that $i_1 \leq i_2 \leq \ldots  \leq i_r$. For any $\alpha \in I_r$, let $u_{\alpha}=u_{i_1}u_{i_2}\ldots u_{i_r}$ and $t_{\alpha}=t_{i_1}t_{i_2} \ldots t_{i_r}$ and for any $\alpha, \beta \in I_r$, define $t_{\alpha,\beta}=\dfrac{ \lcm [u_{\alpha},u_{\beta}]}{u_{\beta}}t_{\beta}-\dfrac{\lcm[u_{\alpha},u_{\beta}]}{u_{\alpha}}t_{\alpha}$.
Then,
\begin{equation} \label{D.Taylor} 
\mathcal{R}(I) \cong B[t_1,\ldots,t_s]/J
\end{equation}
where $J=SJ_1+S( \cup_{r=2}^{\infty}J_r)$ with $J_r= \{ t_{\alpha,\beta}: \alpha,\beta \in I_r \}$.

\vspace{3mm}

\begin{defn}
Let $A$ be a commutative ring and $\{ \textbf{a} \}=\{ a_1, \ldots, a_n \}$ a sequence of elements in $A$. Then for an $A$-module $M$, 
$\{ \textbf{a} \}$ forms a \textit{$d$-sequence} on $M$ if the following holds:
\begin{enumerate}
    \item $a_iM \notin \langle a_1, \ldots, \hat{a_i}, \ldots, a_n \rangle M$ for $i=1, \ldots, n$.
    \item $(\langle a_1, \ldots, a_i \rangle M : a_{i+1}a_kM)= (\langle a_1, \ldots, a_i \rangle M :a_kM)$ for all $k \geq i+1$ and $i=0, \ldots, n-1$.
\end{enumerate}

\end{defn}

\vspace{1mm}

\section{Pfaffian ideals of generic skew-symmetric matrices} \label{generic_skew}

Assume $X$ to be a generic skew-symmetric matrix of odd order $n=2r+1$, $r \in \NN $ and $I=$Pf$_{n-1}(X)$. Then the ideal Pf$_{n-1}(X)$  generated by the maximal order Pfaffians is found to be of linear type (cf. \cite{Baetica}). Huneke proved that the Pfaffians in the generic case form a weak $d$-sequence \cite[1.20]{CH2} and further remarked that they seem to form a $d$-sequence \cite{CH1}. Recently, we gave a proof to show that they indeed form a $d$-sequence \cite{NK_CV}.
\footnote{The result is part of a preprint. The authors are willing to provide a copy of the preprint to the referee if needed.} 
In fact, from the proof, it is not difficult to see that they form an unconditioned $d$-sequence ($i.e.,$ $d$-sequence in any order).

\vspace{1mm}

The structure theorem of ideals of codimension $3$ \cite{DBDE} gives the minimal free resolution of a Pfaffian ring in the generic case corresponding to an alternating map. In the following lemma, we mention the differentials in the resolution (explicitly), which helps in studying the related Rees algebra.

\vspace{1mm}

\begin{lem} \cite[Theorem 2.1]{DBDE} \label{min_res_pfaffians}
Let $B=K[X]$ and $I=$Pf$_{n-1}(X)$. Then the minimal graded free resolution of $B/I$ has the following form.

\begin{equation} \label{min_free_res}
0 \longrightarrow B (-2r-1)\stackrel {d_3} \longrightarrow B^n(-r-1) \stackrel{d_2}\longrightarrow B^n(-r) \stackrel{d_1} \longrightarrow B \longrightarrow 0.    
\end{equation}

where $$ d_1= \begin{bmatrix}
              Pf_{\bar{1}}(X) & Pf_{\bar{2}}(X) & \cdots & Pf_{\bar{n}}(X) 
              \end{bmatrix},
      $$

    $$d_2= (a_{ij})= \left \{ \begin{array}{ccc}
     (-1)^{i+j}x_{n+1-i \; \; n+1-j}           & if & i>j \\
            0                   & if & i=j \\
     (-1)^{i+j+1}x_{n+1-j \; \; n+1-i}  & if & i<j. \\
       \end{array} \right. $$
       
and  $$ d_3= \begin{bmatrix}
              Pf_{\bar{1}}(X) \\
              Pf_{\bar{2}}(X) \\ 
              \vdots  \\
              Pf_{\bar{n}}(X) \\
              \end{bmatrix}  
     $$

\end{lem}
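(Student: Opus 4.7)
The plan is to deduce this from the Buchsbaum--Eisenbud structure theorem for Gorenstein ideals of codimension $3$, then pin down the shifts and the explicit form of the middle map $d_2$ by hand.

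First I would verify that the hypotheses of the structure theorem apply: the generic Pfaffian ideal $I = \mathrm{Pf}_{n-1}(X)$ is perfect of grade exactly $3$ (the maximal possible for a Pfaffian ideal, achieved in the generic case), and hence $B/I$ is a Gorenstein quotient of codimension $3$. Since $I$ has $n$ minimal generators (one for each deleted index), the Buchsbaum--Eisenbud theorem furnishes a minimal free resolution of length $3$ with middle Betti number $n$ and outer Betti numbers $1$, where the middle map is an alternating matrix whose submaximal Pfaffians recover the generators (up to unit). This immediately yields a resolution of the shape
\[
0 \longrightarrow B(-s_3) \longrightarrow B^{n}(-s_2) \longrightarrow B^{n}(-s_1) \longrightarrow B \longrightarrow 0,
\]
with $d_1$ the row of Pfaffians and $d_3$ its transpose (up to sign, from self-duality of a Gorenstein resolution).

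Next I would compute the shifts. Each Pfaffian $\mathrm{Pf}_{\bar\ell}(X)$ is a homogeneous polynomial of degree $r$ in the entries $x_{ij}$ (which have degree $1$), giving $s_1 = r$. For the middle map to consist of entries in $B_1$, as required by minimality and the form of the resolution, one forces $s_2 = r+1$. Finally, self-duality (or a direct degree count from exactness) gives $s_3 = 2r+1 = n$, matching the statement.

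The main work, and the main obstacle, is to verify that the middle differential $d_2$ takes the specific sign/index form in the lemma rather than merely being \emph{some} alternating matrix whose Pfaffians generate $I$. For this I would check directly that the matrix $(a_{ij})$ defined in the statement is alternating (which is immediate from the formula, since swapping $i$ and $j$ flips the sign), and that the two compositions $d_1 \circ d_2$ and $d_2 \circ d_3$ vanish. Both vanishings reduce to the Laplace-type expansion identity for Pfaffians,
\[
\sum_{\ell=1}^{n} (-1)^{k+\ell+\epsilon}\, x_{k\ell}\, \mathrm{Pf}_{\bar\ell}(X) \;=\; 0 \qquad (k=1,\dots,n),
\]
applied after the substitution $i \leftrightarrow n+1-i$ which accounts for the reversed indexing $x_{n+1-i,\,n+1-j}$ and the sign $(-1)^{i+j}$ appearing in $d_2$; the case distinction $i>j$ vs.\ $i<j$ in the formula corresponds to reading the upper and lower triangles of the skew-symmetric matrix with consistent sign. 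Once these two identities hold, the complex is a minimal free complex of the predicted shape, and by the Buchsbaum--Eisenbud acyclicity criterion (together with the already established grade-$3$ condition on $I$) it is exact, completing the proof.
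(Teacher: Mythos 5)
Your proposal follows essentially the same route as the paper: both rest on the Buchsbaum--Eisenbud structure theorem for codimension-$3$ Gorenstein ideals (cited just before the lemma) and reduce to checking that the explicitly written differentials satisfy $d_1\circ d_2=0$ and $d_2\circ d_3=0$, that the shifts are as claimed, and that the complex is minimal. If anything, your version is the more complete one, since you spell out the degree count for the shifts, the Pfaffian Laplace-expansion identity behind the vanishing of the compositions, and the acyclicity criterion for exactness, all of which the paper's proof asserts without detail.
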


\vspace{1mm}

\begin{rem} \label{generic_rees}
As a consequence of the above result, for a generic skew-symmetric matrix $X$ and $I=$Pf$_{n-1}(X) \subseteq B=K[X]$, we observe the following.
\begin{enumerate}[(i)]
    \item reg $(B/I)=2r-2$.
    \item For a standard graded $K$-algebra $A$, it is known that $A^{(c)}$ is Koszul for $c \gg 0$ (cf. \cite{Backelin}). Using Lemma \ref{min_res_pfaffians} and Remark \ref{Koszulness_linear}(i)
    we have, $(B/I)^{(c)}$ is Koszul for $c \geq n/4.$ 
 
    \item Since $I$ is of linear type, the explicit defining relations of $\mathcal{R}(I)$ in $S=K[X,Y]$ will have the form 
$  d_2 . \begin{bmatrix}
   y_1 & y_2 & \ldots & y_n                     
\end{bmatrix}^T
$
where $d_2=(a_{ij})= \left \{ \begin{array}{ccc}
     (-1)^{i+j}x_{n+1-i \; \; n+1-j}           & if & i>j \\
            0                   & if & i=j \\
     (-1)^{i+j+1}x_{n+1-j \; \; n+1-i}  & if & i<j. \\
       \end{array} \right.$
 
\end{enumerate}
\end{rem}

\vspace{2mm}

Some interesting colon conditions are satisfied by the defining relations of the Rees algebra of the Pfaffian ideal in the generic case, which is discussed in the following Lemma. 

\vspace{1mm}

\begin{lem}\label{technical_lemma}
Consider the setup in Remark $\ref{generic_rees}(iii)$. Let the defining relations of $\mathcal{R}(I)=S/J$ obtained in the remark be denoted by $ g_1, \ldots, g_n $ where $S=K[X,Y]$ and $J= \langle g_1, \ldots, g_n \rangle$, a bihomogeneous ideal of $S$. Then the following holds.

\begin{enumerate}
    \item $ g_1, \ldots, g_{n-1} $ forms an $S$-regular  sequence.
    \item $(\langle y_n, g_1, \ldots, g_{n-1} \rangle:_S \text{Pf}_{\bar{1}}(X))= \langle y_1, \ldots, y_n \rangle.$
    \item $(\langle g_1, \ldots, g_{n-1} \rangle :_S g_n)= \langle g_1, \ldots, g_{n-1}, y_n, \text{Pf}_{\bar{1}}(X) \rangle$.
    \item $(\langle g_1, \ldots, g_{n-1} \rangle :_S y_n)=J.$
\end{enumerate}

\end{lem}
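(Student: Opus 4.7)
The plan centers on two universal identities among $g_1, \ldots, g_n$, both immediate from Lemma~\ref{min_res_pfaffians}. Skew-symmetry of $d_2$ forces $y\,d_2\,y^{T}=0$, giving $\sum_{j=1}^{n} y_j\,g_j = 0$; and the complex relation $d_1 d_2 = 0$, after accounting for the reversed indexing in the entries of $d_2$, gives a Koszul-type identity $\sum_{j=1}^{n} \text{Pf}_{\overline{n+1-j}}(X)\,g_j = 0$, whose last coefficient is precisely $\text{Pf}_{\bar{1}}(X)$. These immediately yield $y_n, \text{Pf}_{\bar{1}}(X) \in \langle g_1, \ldots, g_{n-1}\rangle :_S g_n$, proving the inclusion ``$\supseteq$'' in part~(2); they also give $y_n g_n \in \langle g_1, \ldots, g_{n-1}\rangle$, which is ``$\supseteq$'' in~(3).

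For~(1), since $S$ is a polynomial ring (hence Cohen--Macaulay), it suffices to prove $\height \langle g_1, \ldots, g_{n-1}\rangle = n-1$. Writing $[g_1, \ldots, g_{n-1}] = [y_1, \ldots, y_n] \cdot M$ with $M$ obtained from $d_2$ by deleting its last column, the generic rank of $M$ is $n-1$ (as $d_2$ has rank $n-1$ at the generic point), so the ideal of $(n-1)$-minors of $M$ has positive grade in $B$. A standard Eagon--Northcott-type argument for row-vector products with such a matrix then forces $g_1, \ldots, g_{n-1}$ to be an $S$-regular sequence; alternatively, Lemma~\ref{regular_seq} can be invoked after a judiciously chosen monomial order. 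Part~(3) is then a short consequence of the domain structure of $\mathcal{R}(I) = S/J$ (since $I$ has positive grade in the domain $B$): we have $J :_S y_n = J$, which combined with ``$\supseteq$'' above forces $J \subseteq \langle g_1, \ldots, g_{n-1}\rangle :_S y_n \subseteq J :_S y_n = J$.

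Part~(4) is handled by reducing modulo $y_n$: set $R := S/(y_n) \cong B[y_1, \ldots, y_{n-1}]$ and $g'_j := g_j|_{y_n = 0}$, so that $[g'_1, \ldots, g'_{n-1}] = [y_1, \ldots, y_{n-1}] \cdot d'_2$ with $d'_2$ the top-left $(n-1)\times(n-1)$ block of $d_2$. Since $n-1$ is even and the reversed indexing makes $d'_2$ correspond to (a signed permutation of) the bottom-right $(n-1)\times(n-1)$ submatrix of $X$, one has $\text{Pf}(d'_2) = \pm\text{Pf}_{\bar{1}}(X)$; the Pfaffian adjugate identity $d'_2\,N = \text{Pf}(d'_2)\,I$ then yields $\text{Pf}_{\bar{1}}(X)\,y_i \in \langle g'_1, \ldots, g'_{n-1}\rangle$ for every $i$, giving ``$\supseteq$''. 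The reverse containment follows by a constant-term-in-$y$ argument: every element of $\langle g'_1, \ldots, g'_{n-1}\rangle$ has zero $y$-constant part, so any $h$ with $h\,\text{Pf}_{\bar{1}}(X) \in \langle g'_1, \ldots, g'_{n-1}\rangle$ must have vanishing $y$-constant in the domain $B$, hence $h \in \langle y_1, \ldots, y_{n-1}\rangle$.

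The main obstacle is the reverse containment in~(2). The strategy is again to descend modulo $y_n$ and prove $\langle g'_1, \ldots, g'_{n-1}\rangle :_R g_n = \langle g'_1, \ldots, g'_{n-1}, \text{Pf}_{\bar{1}}(X)\rangle$. Since $g'_1, \ldots, g'_{n-1}$ is an $R$-regular sequence (the same height argument from~(1) applies, now to the generically invertible $d'_2$), the ring $\bar R := R/\langle g'_1, \ldots, g'_{n-1}\rangle$ is Cohen--Macaulay, and the problem reduces to identifying the annihilator of $\bar g_n \in \bar R$ with the principal ideal generated by $\overline{\text{Pf}_{\bar{1}}(X)}$. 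I would establish this via a linkage computation using the Pfaffian adjugate of $d'_2$: any $h$ satisfying $hg_n \in \langle g'_1, \ldots, g'_{n-1}\rangle$ can be multiplied through by cofactors to produce $h\,\text{Pf}_{\bar{1}}(X) \in \langle g'_1, \ldots, g'_{n-1}\rangle$, after which~(4) forces $h \in \langle y_1, \ldots, y_{n-1}\rangle$; a $y$-degree induction combined with $\text{Pf}_{\bar{1}}(X)\,y_i \in \langle g'_1, \ldots, g'_{n-1}\rangle$ then completes the reduction. Controlling extraneous Pfaffian factors in the inductive step is the key technical hurdle.
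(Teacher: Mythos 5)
Your two syzygies $\sum_j y_j g_j=0$ and $\sum_j \text{Pf}_{\overline{n+1-j}}(X)\,g_j=0$, and the inclusions ``$\supseteq$'' they yield, are exactly the paper's starting point; your proof of (3) (primality of $J$ plus $y_n\notin J$) and your Pfaffian-adjugate proof of (4) also match the paper in substance, the paper merely writing $\text{Pf}_{\bar 1}(X)$ as a $B$-combination of the entries of each row of $d_2$ instead of packaging this as $d_2'N=\text{Pf}(d_2')\,I$. For (1), however, your primary route is not a proof: knowing that $I_{n-1}(M)$ has positive grade does not bound the codimension of $V(g_1,\dots,g_{n-1})$, and no ``Eagon--Northcott-type argument'' is quoted that does. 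You should commit to your fallback and actually exhibit the order; the paper uses graded lex with $x_{12}>x_{23}>\cdots>x_{n-1\,n}$ ahead of all other variables, which makes the leading terms of $g_1,\dots,g_{n-1}$ pairwise coprime so that Lemma \ref{regular_seq} applies.

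The genuine gap is the reverse inclusion in (2), and the mechanism you propose there is false, not merely incomplete. You claim that $hg_n\in\langle g_1',\dots,g_{n-1}'\rangle$ can be upgraded, ``by multiplying through by cofactors,'' to $h\,\text{Pf}_{\bar 1}(X)\in\langle g_1',\dots,g_{n-1}'\rangle$. Take $h=\text{Pf}_{\bar 1}(X)$: then $hg_n\in\langle g_1',\dots,g_{n-1}'\rangle$ by your own second syzygy, yet $h\,\text{Pf}_{\bar 1}(X)=\text{Pf}_{\bar 1}(X)^2$ lies in $B$ and hence cannot lie in $\langle g_1',\dots,g_{n-1}'\rangle\subseteq\langle y_1,\dots,y_{n-1}\rangle$. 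So the implication fails precisely on elements of the ideal you are trying to identify, the subsequent appeal to (4) plus a $y$-degree induction has nothing to stand on, and you flag the remaining step as unresolved. The paper closes (2) by a different route: having ``$\supseteq$,'' it proves that $P=\langle g_1,\dots,g_{n-1},y_n,\text{Pf}_{\bar 1}(X)\rangle$ is a \emph{prime} ideal --- the work being to show $\text{Pf}_{\bar 1}(X)$ is irreducible in $S$ and stays prime modulo $\langle g_1,\dots,g_{n-1},y_n\rangle$ --- and then uses that $J'$ is a complete intersection of height $n-1$, so that $(J':_S g_n)$ is unmixed of height $n-1$ and, being squeezed between the height-$(n-1)$ prime $P$ and its radical, must equal $P$. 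If you want to keep a linkage flavour, the correct target is the ideal-theoretic statement $\langle g_1',\dots,g_{n-1}'\rangle:\langle y_1,\dots,y_{n-1}\rangle=\langle g_1',\dots,g_{n-1}',\text{Pf}_{\bar 1}(X)\rangle$ obtained by linking through the complete intersection, not a termwise cofactor manipulation on single elements.
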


\begin{proof}   
Let $J'= \langle g_1, \ldots, g_{n-1} \rangle$ and $B=K[X]$.

\begin{enumerate}
    \item Consider the graded lexicographic term order on $S$ induced by $x_{12} > x_{23} > x_{34} > x_{45} > \cdots > x_{n-1 n}$ followed by the remaining indeterminates. Then from \cite[Lemma 2.2]{reg-seq}, we get that $g_1, \ldots, g_{n-1}$ forms an $S$-regular sequence. In fact, any $n-1$ relations among $g_1, \ldots, g_n$ can be proved to form a regular sequence by a corresponding change in the ordering of the indeterminates.
        \item For $1 \leq i,j \leq n$, the coefficient of $y_j$ in $g_i$ is given by the $(i,j)\text{-th}$ entry of the matrix 
            $$d_2=\begin{bmatrix}
                        0      & -x_{n-1\;n} & \ldots & -x_{2\;n} & x_{1 \;n}\\
                        x_{n-1 \;n} &   0    & \ldots & x_{2\;n-1} & -x_{1\;n-1}\\
                        \vdots & \vdots & \ddots & \vdots & \vdots\\
                        x_{2 \;n} & -x_{2 \; n-1} & \ldots & 0 & -x_{12} \\
                        -x_{1\;n} & x_{1\;n-1} & \ldots & x_{12} &   0   \\
                    \end{bmatrix}.$$ 
                    
\noindent Since the generic matrix has the form $X= \begin{bmatrix}
                        0      & x_{12} & \ldots & x_{1 \; n-1} & x_{1\;n}\\
                        -x_{12} &   0    & \ldots & x_{2\;n-1} & x_{2\;n}\\
                        \vdots & \vdots & \ddots & \vdots & \vdots \\
                        -x_{1\;n-1} & -x_{2\;n-1} & \ldots &   0  & x_{n-1 \; n} \\
                        -x_{1 \;n} & -x_{2 \; n} & \ldots & -x_{n-1 \; n} & 0 \\
                        \end{bmatrix}$, for each $i=1, \ldots,n-1$, we can represent  Pf$_{\bar{1}}(X)=  \sum_{\substack{j=1 \\ j \neq i }}^{n-1} \alpha_{ij}a_{ij}$ where $a_{ij}$ is the $(i,j)$-th entry of the matrix $d_2$ and $\alpha_{ij} \in B$ comes with respect to $a_{ij}$. 
                        From the form of the matrix $d_2$ and the representation of Pf$_{\bar{1}}(X)$ mentioned before, for each $i=1, \ldots, n-1$, one obtains 
                        the relations, 
                        \begin{equation} \label{pfaffians_rel}
                        \text{Pf}_{\bar{1}}(X)y_i= \sum _{\substack{j=1 \\ j \neq i }}^{n-1}\alpha_{ij}g_j\text{mod}(y_n).    
                        \end{equation}
                         Thus the inclusion $\supseteq$ follows. The equality can now be seen as a consequence of the primality of $\langle y_1, \ldots, y_{n
} \rangle$ and the inclusion $ \langle y_n, g_1, \ldots, g_{n-1} \rangle \subseteq \langle y_1, \ldots, y_n \rangle.$  
    \item Let $J''= \langle g_1, \ldots, g_{n-1}, y_n, \text{Pf}_{\bar{1}}(X) \rangle$. From the general form of $g_i$, $g_n \notin \langle g_1, \ldots, g_{n-1} \rangle$. Moreover, since the bidegree of $g_n$ is $(1,1)$ and the bidegrees of the generators of $J''$ are $(1,1)$, $(0,1)$ and $(r,0)$, $g_n \notin J''$. 
    
    It is not difficult to see that $g_n y_n=- (\sum_{i=1}^{n-1}g_i y_{i})$ and $g_n \text{Pf}_{\bar{1}}(X)= -(\sum_{i=1}^{n-1} g_{i}
    \text{Pf}_{\overline{n-i+1}}(X))$. Hence the inclusion $\supseteq$ follows. 
    Since $ J' \subseteq J''$, to prove the equality, it suffices to show that $J''$ is a prime ideal. This is equivalent to proving $\overline{\text{Pf}_{\bar{1}}(X)}$ is a prime element in $S/L$ where $L=\langle g_1, \ldots,g_{n-1}, y_n\rangle$ and $^{-}$ denotes an element in the respective quotient ring. First, we claim that Pf$_{\bar{1}}(X)$ is irreducible in $B$. To prove this, consider $f=\text{Pf}_{\bar{1}}(X)$ as a polynomial in one variable $x_{23}$ that is,  $f \in B'[x_{23}]$ where $ B'=K[X']$, $X'=X \backslash \{ x_{23}\}$. Let $B''$ be the field of fractions of $B'$. Then we can write $f=g(x')x_{23}+h(x')$ where $g(x'), h(x') \in B' \subseteq B''$ and $\text{gcd}(g(x'),h(x'))=1$. Then, clearly $f$ is an irreducible polynomial in $B''[x_{23}]$. Now assume that $f=p(x)q(x)$ where $p(x), q(x) \in B$. Then since $f$ is irreducible in $B''[x_{23}]$, without loss of generality let $p(x)$ be a unit in $B''[x_{23}]$. This implies $p(x) \in B''$, but since $p(x) \in B$, we have $p(x) \in B'$. This in turn gives $p(x)|\text{gcd}(g(x'),h(x'))$. Thus $p(x)$ must be a unit in $B$. Hence $\text{Pf}_{\bar{1}}(X)$ is an irreducible element in $B$. This implies $\text{Pf}_{\bar{1}}(X)$ is a prime element in $B$.

    Now, assume $\overline{\text{Pf}_{\bar{1}}(X) \alpha}=\overline{\beta \gamma}$ in $S/L$ for some $\alpha, \beta, \gamma \in S$. From Equation (\ref{pfaffians_rel}), one can assume that $\alpha \in B$. Since $\text{Pf}_{\bar{1}}(X)\alpha \in B$ and $\text{Pf}_{\bar{1}}(X) \alpha-\beta \gamma$ is a bihomogeneous element in $L=\langle g_1, \ldots, g_{n-1}, y_n \rangle \subseteq S$, one can assume that $ \beta \gamma \in B$ and hence $\text{Pf}_{\bar{1}}(X) \alpha-\beta \gamma=0$ in $S$. That is, $\text{Pf}_{\bar{1}}(X) | \beta$ or $\text{Pf}_{\bar{1}}(X) | \gamma$, since $\text{Pf}_{\bar{1}}(X)$ is a prime element in $B$. Thus, $\overline{\text{Pf}_{\bar{1}}(X)}| \overline{\beta}$ or $\overline{\text{Pf}_{\bar{1}}(X)}| \overline{\gamma}$ and hence, $\overline{\text{Pf}_{\bar{1}}(X)}$ is a prime element in $S/L$.
    
    \item From $(3)$ of this lemma, we have the inclusion $\supseteq$. Moreover, since $J$ is a prime ideal containing $J'$, we get the other inclusion.
\end{enumerate}
\end{proof}

The properties mentioned in Lemma \ref{technical_lemma} are found to be interestingly similar to those satisfied by the defining relations of the Rees algebra of an ideal generated by a regular sequence ( \cite[Lemma 3.2]{Neeraj}).

\vspace{1mm}

\begin{prop}
Let $B=K[X]$ and $I=\text{Pf}_{n-1}(X)=\text{Pf}_{2r}(X).$ Then the defining relations of $\mathcal{R}(I)$ given in Remark $\ref{generic_rees}(iii)$
form a $d$-sequence. Moreover, they generate an almost complete intersection ideal. 
\end{prop}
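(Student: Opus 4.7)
The plan is to first exploit Lemma \ref{technical_lemma} to conclude that $J$ is an almost complete intersection, which will simultaneously yield condition~(1) of the $d$-sequence definition, and then to verify condition~(2) by splitting into cases. Since $B$ is a polynomial ring of dimension $\binom{n}{2}$ and $\dim \mathcal{R}(I) = \binom{n}{2}+1$, the ideal $J$ has height $n-1$ in the $\binom{n}{2}+n$-dimensional ring $S$. As $J = \langle g_1, \ldots, g_n \rangle$, to establish the almost complete intersection property it suffices to show that $J$ is minimally generated by $n$ elements. Suppose instead some $g_i$ is redundant; by the symmetry observed in the proof of Lemma \ref{technical_lemma}(1), any $n-1$ of the $g_i$'s form an $S$-regular sequence after a suitable reordering of the indeterminates, so we may assume $g_n \in J' := \langle g_1, \ldots, g_{n-1} \rangle$, forcing $(J' :_S g_n) = S$ and contradicting Lemma \ref{technical_lemma}(2). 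Hence the $n$ generators are minimal and $J$ is an almost complete intersection; the same argument, applied with $g_n$ replaced by any $g_i$, yields condition~(1) of the $d$-sequence.

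For condition~(2), the case $i \leq n-2$ is routine. For such $i$ and any $k \geq i+1$, both sets $\{g_1, \ldots, g_i, g_{i+1}\}$ and $\{g_1, \ldots, g_i, g_k\}$ consist of at most $n-1$ of the $g_j$'s, hence form regular sequences by Lemma \ref{technical_lemma}(1). Consequently $g_{i+1}$ and $g_k$ are both non-zerodivisors on $S/\langle g_1, \ldots, g_i \rangle$. Given $h$ with $h g_{i+1} g_k \in \langle g_1, \ldots, g_i \rangle$, cancelling these two non-zerodivisors in turn gives $h \in \langle g_1, \ldots, g_i \rangle$, so $h g_k \in \langle g_1, \ldots, g_i \rangle$, and both colon ideals collapse to $\langle g_1, \ldots, g_i \rangle$.

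The essential case is $i = n-1$, $k = n$: we must verify $(J' :_S g_n^2) = (J' :_S g_n) = Q$, where by Lemma \ref{technical_lemma}(2), $Q := \langle J', y_n, \text{Pf}_{\bar{1}}(X) \rangle$ is prime. The crux, and what I expect to be the main obstacle, is showing $g_n \notin Q$. Since $J'$ is a complete intersection of height $n-1$, $S/J'$ is Cohen--Macaulay, so every associated prime of $S/J'$ is a minimal prime of height $n-1$. The annihilator $Q/J'$ of $\bar{g}_n$ therefore lies in the finite union of minimal primes of $J'$, and prime avoidance applied to the prime $Q$ gives $Q \subseteq \mathfrak{p}$ for some minimal prime $\mathfrak{p}$ of $J'$. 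A height comparison $n-1 = \text{ht}(J') \leq \text{ht}(Q) \leq \text{ht}(\mathfrak{p}) = n-1$ in the catenary domain $S$ forces $Q = \mathfrak{p}$, so $Q$ itself is a minimal prime of $J'$. The prime $J$ also contains $J'$ and has height $n-1$, hence is another minimal prime of $J'$, and $J \neq Q$ because $y_n \in Q$ while $y_n \notin J$ (every nonzero element of $J$ has $x$-degree at least $1$, as $J$ is generated in bidegree $(1,1)$). If $g_n$ lay in $Q$, then $J = J' + \langle g_n \rangle \subseteq Q$ would make the two incomparable minimal primes $J$ and $Q$ comparable, a contradiction. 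With $g_n \notin Q$ secured, any $h$ with $h g_n^2 \in J'$ satisfies $h g_n \in (J' :_S g_n) = Q$, and primality of $Q$ forces $h \in Q = (J' :_S g_n)$, giving $(J' :_S g_n^2) \subseteq (J' :_S g_n)$; the reverse inclusion is immediate, completing the verification.
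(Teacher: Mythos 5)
Your proof is correct, and its core coincides with the paper's: both reduce the $d$-sequence property to the single identity $(J':_S g_n)=(J':_S g_n^2)$ and settle it by combining the primality of $(J':_S g_n)$ from Lemma \ref{technical_lemma}(2) with the claim $g_n\notin(J':_S g_n)$. The differences lie in what is proved versus what is cited or asserted. The paper obtains $\height J=n-1$ from the Cohen--Macaulayness of $\mathcal{R}(I)$ (citing Eisenbud--Huneke), asserts without argument both that $J$ is minimally generated by $n$ elements and that $g_n\notin(J':_S g_n)$, and invokes an external lemma to pass from ``$d$-sequence generated by $\height+1$ elements'' to ``almost complete intersection.'' You instead get the height from a dimension count, prove minimal generation by contradiction using Lemma \ref{technical_lemma}(2), handle the lower colon conditions $i\le n-2$ explicitly via the regularity of all $(n-1)$-element subsequences, and---most substantively---supply the missing justification that $g_n\notin(J':_S g_n)$, by showing that $(J':_S g_n)$ and $J$ are distinct minimal primes of the complete intersection $J'$ (distinct because $y_n$ lies in one but not the other) and hence incomparable. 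This makes your argument self-contained precisely where the paper's is not; the only mild caveat is that your reduction ``we may assume the redundant generator is $g_n$'' leans on the symmetry of Lemma \ref{technical_lemma} under relabelling of the generators, which the paper's proof of that lemma indicates but does not state formally.
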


\begin{proof}

It is known that the Rees algebra of $I$ is Cohen-Macaulay \cite[Proposition 2.8]{DECH}. This implies height of the defining ideal of the Rees algebra is $n-1$. From Lemma \ref{technical_lemma}$(1)$ we have, $ \{ g_1, \ldots,g_{n-1} \} $ forms a regular sequence. To prove that $\{ g_1, \ldots,g_n \}$ forms a $d$-sequence, it suffices to show that $(J':g_n)=(J':{g_n}^2)$ where $J' = \langle g_1, \ldots,g_{n-1} \rangle$. Clearly $(J':g_n) \subseteq (J':{g_n}^2)$. To see the other inclusion, let $\alpha \in (J':{g_n}^2)$ $i.e.,$ $\alpha \cdot g_n \in (J':g_n)$. Since $(J':g_n)$ is a prime ideal (from the proof of Lemma \ref{technical_lemma}$(2)$) and $g_n \notin (J':g_n)$, this implies $\alpha \in (J':g_n)$. Hence, the equality follows.

Since the defining ideal is minimally generated by $n$ elements, one more than the height of the ideal, and the sequence forms a $d$-sequence, by \cite[Lemma 4.2]{a.c.i}, they generate an almost complete intersection ideal.

\end{proof}

\vspace{2mm}

It is evident from Remark $\ref{generic_rees}(iii)$ that the defining relations of the Rees algebra of the maximal order Pfaffians of the generic skew-symmetric matrix are generated by quadrics. So, it is natural to ask if something stronger holds. In this direction, we pose the following question.

\vspace{1mm}

\begin{quest}
Is it true that the Rees algebra of the maximal order Pfaffians in the generic case is always Koszul? 
\end{quest}

The following study indicates that it is not true in general.

\vspace{2mm}

Let $X_1$ be the generic skew-symmetric matrix of order $3$ and consider $B_{X_1}=K[X_1]$ with $I_1=\text{Pf}_2(X_1) = \langle x_{12},x_{13},x_{23} \rangle $ which is the graded maximal ideal of $B_{X_1}$. Then the defining ideal of $\mathcal{R}(I_1)$ is given by the $2$-minors of the matrix $ \begin{pmatrix}
x_{12} & x_{13} & x_{23}\\
y_1 & y_2 & y_3
\end{pmatrix}$ and the minimal bigraded $S$-free resolution of $\mathcal{R}(I_1)$ has the form,
$$ 0 \longrightarrow \begin{matrix}
                       S(-1,-2) \\
                        \oplus  \\
                       S(-2,-1)
                      \end{matrix} 
      \longrightarrow \begin{matrix}
                       S(-1,-1)^3 \\
                      \end{matrix} 
       \longrightarrow S \longrightarrow 0. $$

Since $\reg_S(\mathcal{R}(I_1))=1$ and $S$ seen as a graded ring with respect to the total degree is Koszul, by transfer of Koszulness (\cite[Theorem $2$]{CNR}), $\mathcal{R}(I_1)$ is Koszul. In fact, the defining ideal of $\mathcal{R}(I_1)$ is observed to be generated by a Gr$\ddot{\textrm{o}}$bner basis of quadrics with respect to the graded reverse lexicographic order induced by $x_{12} > x_{13} > x_{23}>y_1 > y_2 > y_3$.

Now, let $X_2$ be the generic skew-symmetric matrix of order $5$. Consider $B_{X_2}=K[X_2]$ with $I_2=\text{Pf}_4(X_2) =   \langle x_{14}x_{23} - x_{13}x_{24} + x_{12}x_{34}, \; x_{15}x_{23} - x_{13}x_{25} + x_{12}x_{35}, \; x_{15}x_{24} - x_{14}x_{25} + x_{12}x_{45}, \; x_{15}x_{34} - x_{14}x_{35} + x_{13}x_{45}, \; x_{25}x_{34} - x_{24}x_{35} + x_{23}x_{45} \rangle $. The following Betti table of the Pfaffian ring $B_{X_2}/I_2$ shows that $I_2$ does not have a linear resolution.

\vspace{1mm}

$$\beta_{(i,i+j)}= \;
\begin{tabular}{c| c c c c}
        & \red{0} & \red{1} & \red{2} & \red{3} \\
        \hline
\red{0} &    1    &    0   &     0    & 0 \\
\red{1} &    0    &    5   &     5    & 0 \\
\red{2} &    0    &    0   &     0    & 1 \\
\end{tabular}
$$ 

\vspace{1mm}

As a consequence, $\mathcal{R}(I_2)$ as a graded ring with respect to the total degree is not Koszul (Remark \ref{Koszulness_linear} (iii)).

\vspace{2mm}

In the following proposition, consider $I_1$ and $I_2$ as defined before. 

\begin{prop} \label{generic} Let $X$ be a generic skew-symmetric matrix of odd order $n$ and $B=K[X]$ with $I=\text{Pf}_{n-1}(X)$ for $n=3,5$. Then,
\begin{enumerate}[a)]
    \item $\mathcal{R}(I_1)$ is Koszul, whereas $\mathcal{R}(I_2)$ is not Koszul.
    \item $I^j$ has a linear resolution for all $j \geq 2$.
    \item reg$_x^S(\mathcal{R}(I^j))=0$ for $j \geq 2$.
     
\end{enumerate}
\end{prop}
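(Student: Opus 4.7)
For part (a), the $n=3$ case is essentially already carried out in the paragraph preceding the proposition: the minimal bigraded $S$-free resolution of $\mathcal{R}(I_1)$ displayed there has total-degree regularity $1$, and since $S$ with its total grading is Koszul, transfer of Koszulness \cite[Theorem~2]{CNR} gives that $\mathcal{R}(I_1)$ is Koszul. For $n=5$ I would argue by contrapositive via Remark \ref{Koszulness_linear}(iii): if $\mathcal{R}(I_2)$ were Koszul, then $I_2^k$ would have a linear resolution for every $k \geq 1$, and in particular $I_2$ itself would, contradicting the Betti table of $B_{X_2}/I_2$ computed just above, which has a nonzero entry $\beta_{2,4}=1$.

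For part (b) the $n=3$ case is immediate: $I_1$ is the graded maximal ideal of $B_{X_1}$, so all powers $I_1^j$ have a linear resolution, and this also follows formally from (a) together with Remark \ref{Koszulness_linear}(iii). The substantive case is $n=5$, where the goal is to bound $\reg_{B_{X_2}}(I_2^j) \leq 2j$ for $j \geq 2$. My plan is to write down the minimal bigraded $S$-free resolution of $\mathcal{R}(I_2)=\mathrm{Sym}(I_2)$ and extract its $Y$-degree $j$ strand as a $B_{X_2}$-free resolution of $I_2^j$; linearity of that strand for $j \geq 2$ is then the target. Since the maximal Pfaffian sequence is a $d$-sequence and $I_2$ is of linear type, the approximation complex $\mathcal{Z}_{\bullet}$ of Herzog--Simis--Vasconcelos, built from the Koszul cycles of the Pfaffians over $B_{X_2}$, is acyclic and provides such a resolution, with bigraded shifts controlled by the syzygies of $I_2$ recorded in Lemma \ref{min_res_pfaffians} together with the almost-complete-intersection structure from Lemma \ref{technical_lemma}. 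The target then reduces to checking that every shift $(a,b)$ appearing in homological degree $i$ with $b \leq j$ satisfies $a-i \leq 2j$. This bidegree bookkeeping is the main obstacle: the bound is violated for the generator strand when $j=1$ (recovering the non-linearity of $I_2$) but should be uniformly valid for $j \geq 2$, and this is where essentially all of the technical work sits.

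For part (c) I would use the general principle, essentially the $Y$-strand reading of the $S$-resolution, that for an equigenerated ideal $L$ of degree $d$ the vanishing $\reg_x^S(\mathcal{R}(L))=0$ is equivalent to $L^k$ having a $dk$-linear resolution for every $k \geq 1$. Applying this with $L = I^j$ (of degree $2j$) reduces (c) to the statement that $(I^j)^k = I^{jk}$ has a $2jk$-linear resolution for every $j \geq 2$ and every $k \geq 1$; since $jk \geq 2$, this is exactly part (b), so no new work is needed beyond the bidegree analysis carried out there.
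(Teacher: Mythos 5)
Parts (a) and (c) of your proposal coincide with the paper's argument: (a) is read off from the displayed bigraded resolution of $\mathcal{R}(I_1)$ together with transfer of Koszulness \cite[Theorem 2]{CNR}, and from the Betti table of $B_{X_2}/I_2$ via Remark \ref{Koszulness_linear}(iii); (c) is the same reduction to (b), which the paper performs by citing \cite[Theorem 5.2]{x-reg}.

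The genuine gap is in part (b) for $n=5$, which is the only substantive claim. You propose to resolve $\mathcal{R}(I_2)$ by the approximation complex $\mathcal{Z}_{\bullet}$ and to extract its $Y$-degree $j$ strand as a resolution of $I_2^j$, but you explicitly defer the entire verification (``this is where essentially all of the technical work sits''), so the key step is announced rather than proved. There is also a conceptual wrinkle in the plan as stated: the terms of $\mathcal{Z}_{\bullet}$ are Koszul cycle modules $Z_i$ tensored with symmetric powers, and the $Z_i$ are not free over $B_{X_2}$; hence $\mathcal{Z}_{\bullet}$ is not a complex of free $S$-modules, its strands are not free resolutions of $I_2^j$, and one cannot read off regularity from ``shifts'' without first resolving, or at least bounding the regularity of, each $Z_i$. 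The paper takes a different and shorter route: since $I_2$ is generated by a $d$-sequence of $5$ forms of degree $2$, the Cutkosky--Herzog--Trung formula $\reg(I^j)=(j-n-1)r+\reg(I^{n+1})$ for $j\ge n+1$ \cite[Corollary 3.8]{Cutkosky&Herzog&Trung}, combined with the assertion $\reg(I^{n+1})=(n+1)r$, yields linearity of the resolution of $I_2^j$. (Be aware that even this argument, as written, only covers $j\ge 6$ directly and leaves the powers $2\le j\le 5$ and the value of $\reg(I^{6})$ to computation; your proposal does not supply those cases either.)
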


\begin{proof} 
\begin{enumerate}[a)]
\item Koszulness of $\mathcal{R}(I_1)$ and non-Koszulness of $\mathcal{R}(I_2)$, follows from the preceding discussion.

\item For $n=3$, since $\mathcal{R}(I_1)$ is Koszul, we have in fact reg$_B (I_1)^j=2j$ for $j \geq 1$. For $n=5$, from Macaulay2 computations one obtains, reg$_A(I^{j})=2j$ for $2 \leq j \leq 6$. The result then follows from the result of Cutkosky, Herzog and Trung, which says that if an ideal $I$ of a ring $A$ is generated by a $d$-sequence of $n$ forms and is equigenerated of degree $r$, then for all  $j \geq n+1$, reg$_A(I^j)=(j-n-1)r+$reg$_A(I^{n+1})$ \cite[Corollary 3.8]{Cutkosky&Herzog&Trung}.

\item Follows from b) and \cite[Theorem 5.2]{x-reg}.

\end{enumerate}

\end{proof}

\vspace{1mm}

\begin{rem}  
For $\Delta=(1,1)$, the following is a realization of the diagonal subalgebra of the Rees algebra $\mathcal{R}(I_1)$, as a quotient of the standard graded polynomial ring. 
\vspace{2mm}

For $n=3$, let $T=\begin{pmatrix}
                                             t_{11} & t_{12} & t_{13} \\
                                            t_{21} & t_{22} & t_{23} \\
                                            t_{31} & t_{32} & t_{33} \\
                                            \end{pmatrix}$
where for $1 \leq j \leq 3$, $t_{1j}=x_{12}y_j$, $t_{2j}=x_{13}y_j$ and $t_{3j}=x_{23}y_j$.
Then $\mathcal{R}(I_1)_{\Delta}$ can be identified with $K[T]/I_2(T) +\langle t_{12}-t_{23},\; t_{21}-t_{32},\; t_{11}-t_{33} \rangle$ where $I_2(T)$ denotes the ideal generated by the $2$-minors of $T$ (from Equation (\ref{presentation}) and Lemma \ref{present2}).
From the above defining relations, we get 
$\mathcal{R}(I_1)_{\Delta}\cong K[T]/ \langle - t_{12}t_{21} + t_{11}t_{22},\; t_{11}t_{21} - t_{12}t_{31}, \; t_{21}^2 - t_{22}t_{31},\; t_{11}t_{12} - t_{13}t_{21}, \; t_{11}^2  - t_{13}t_{31}, \; t_{11}t_{21} - t_{12}t_{31}, \; t_{12}^2  - t_{13}t_{22}, \; t_{11}t_{12} - t_{13}t_{21}, \; - t_{12}t_{21} + t_{11}t_{22}\rangle.$ 

Similarly, we can write an expression for $\mathcal{R}(I_2)$ for $\Delta=(1,1)$.

\end{rem}

\vspace{1mm}

\vspace{1mm}

It is evident from Proposition \ref{generic} that $\mathcal{R}(I)$ is not always Koszul for the maximal order Pfaffian ideal $I$ of the general skew-symmetric matrix $X$. Regardless, the following result shows that, all its diagonals are always Koszul.

\vspace{1mm}

\begin{thm} \label{Koszul_generic_diagonals}
Let $B=K[X]$ and $I=$Pf$_{n-1}(X)$ where $n=2r+1$, $r \in \NN$. Then $\mathcal{R}(I)_{\Delta}$ is Koszul for all $\Delta=(c,e)$, $c >0$, $e>0$.
\end{thm}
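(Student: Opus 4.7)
The plan is to apply the shift criterion of Remark \ref{Koszulness_linear}(i) to the minimal bigraded $S$-free resolution of $\mathcal{R}(I) = S/J$.

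\textbf{Trivial diagonals.} Since $I$ is generated by forms of degree $r = (n-1)/2$, each power $I^{ek}$ is generated in degree $ekr$, whence $(I^{ek})_{ck} = 0$ for every $k \geq 1$ as soon as $c < er$. In this range $\mathcal{R}(I)_\Delta = K$, which is Koszul. It therefore suffices to handle the non-trivial range $c \geq er$, which forces $c \geq r$ and $e \geq 1$.

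\textbf{Shift bounds in the $S$-resolution.} By Remark \ref{generic_rees}(ii), $J$ is minimally generated in bidegree $(1,1)$ by the entries $g_1, \ldots, g_n$ of the row vector $Y \cdot d_2$. Two distinguished minimal first syzygies appear already at homological degree two: $Y^\top$ of bidegree $(1, 2)$, arising from the identity $Y d_2 Y^\top = 0$ (skew-symmetry of $d_2$), and $\mathrm{Pf}^\top = (\mathrm{Pf}_{\bar{1}}(X), \ldots, \mathrm{Pf}_{\bar{n}}(X))^\top$ of bidegree $(r+1, 1)$, arising from $d_2 \, d_3 = 0$; together with the Koszul syzygies of bidegree $(2, 2)$. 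Using Lemma \ref{technical_lemma}, which asserts that $g_1, \ldots, g_{n-1}$ form an $S$-regular sequence with $(J' : g_n) = J' + (y_n, \mathrm{Pf}_{\bar{1}}(X))$, one can build $F_\bullet$ as the mapping cone of the Koszul resolution on $g_1, \ldots, g_{n-1}$ (whose shifts are $(i,i)$ at level $i$) with a resolution of the two-generated module $(y_n, \mathrm{Pf}_{\bar{1}}(X)) \cdot S/J'$, whose generators sit in bidegrees $(0,1)$ and $(r,0)$. Equivalently, one may appeal to the Herzog--Simis--Vasconcelos approximation complex for the $d$-sequence $\mathrm{Pf}_{\bar{1}}(X), \ldots, \mathrm{Pf}_{\bar{n}}(X)$: its $i$-th term has $Y$-shift exactly $i$, while its $X$-shifts are controlled by the Buchsbaum--Eisenbud resolution of Lemma \ref{min_res_pfaffians}. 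Either route should yield the bound
\[
\beta^S_{i, (a, b)}(\mathcal{R}(I)) \neq 0 \implies a \leq i + r - 1 \text{ and } b \leq i.
\]

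\textbf{Verification.} Granted the bound, for any $c \geq r$ and $e \geq 1$ one computes
\[
\frac{a}{c} \leq \frac{i + r - 1}{r} = 1 + \frac{i - 1}{r} \leq i + 1, \qquad \frac{b}{e} \leq i \leq i + 1,
\]
so $\max\{a/c,\, b/e\} \leq i + 1$, and Remark \ref{Koszulness_linear}(i) gives the Koszulness of $\mathcal{R}(I)_\Delta$.

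\textbf{Main obstacle.} The heart of the argument is establishing the shift bound. The Pfaffian syzygy $\mathrm{Pf}^\top$ is already tight, giving $a - i = r - 1$ at $i = 2$; the essential point is to verify that the iterated syzygies it propagates do not inflate $a - i$ beyond $r - 1$, and that the $Y^\top$-chain keeps $b - i$ at $0$. Here the $d$-sequence identities of Lemma \ref{technical_lemma} and the Buchsbaum--Eisenbud structure in Lemma \ref{min_res_pfaffians} are the key structural inputs.
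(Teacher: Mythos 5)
Your route is genuinely different from the paper's, and it has a real gap precisely at the step you yourself flag as the ``main obstacle.'' Everything hinges on the shift bound $\beta^S_{i,(a,b)}(\mathcal{R}(I))\neq 0 \Rightarrow a\le i+r-1$ and $b\le i$, and this is never established. The half $b\le i$ is equivalent to $\reg_y^S(\mathcal{R}(I))=0$ and could be justified by R\"omer's result for ideals generated by a $d$-sequence (the paper invokes exactly this in the remark following the theorem), but the half $a\le i+r-1$ amounts to $\reg_x^S(\mathcal{R}(I))\le r-1$, and neither construction you sketch delivers it. The mapping cone built from Lemma \ref{technical_lemma} produces a (generally non-minimal) resolution over $T=S/\langle g_1,\dots,g_{n-1}\rangle$, not over $S$; converting it to an $S$-resolution and controlling minimality is exactly the missing work. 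The approximation complex has non-free terms (the Koszul cycle modules), so reading its $X$-shifts off the Buchsbaum--Eisenbud resolution of Lemma \ref{min_res_pfaffians} again requires resolving those terms and bounding the new shifts that appear. Since equality $a-i=r-1$ is already attained at $i=2$ by the Pfaffian syzygy, there is no slack, and the assertion that the iterated syzygies ``do not inflate $a-i$'' is the entire difficulty; saying either route ``should yield the bound'' leaves the proof incomplete. (The reduction to $c\ge r$ via the trivial diagonals with $c<er$, and the arithmetic verification of the criterion of Remark \ref{Koszulness_linear}(i) granted the bound, are both fine.)

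For comparison, the paper avoids the minimal $S$-resolution altogether: it uses the colon identities of Lemma \ref{technical_lemma} to assemble an explicit eventually periodic complex over the complete intersection $T$, with maps alternating between multiplication by $g_n$ and by $y_n$, then applies the (exact) diagonal functor and transfers Koszulness from $T_\Delta$ along this complex as in \cite[Theorem 3.1]{Neeraj} and \cite[Theorem 3.2]{CC}. That is why Lemma \ref{technical_lemma}(2)--(4) is stated in terms of colon ideals rather than syzygy degrees. If you wish to rescue your approach, you would need an independent proof that $\reg_x^S(\mathcal{R}(I))\le r-1$, for instance by bounding $\sup_j\bigl(\reg_B(I^j)-jr\bigr)$ and appealing to \cite[Theorem 5.2]{x-reg}; but that is itself a nontrivial statement about all powers of $I$ and is not in the paper.
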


\begin{proof}

Let $T=S/J'$ where $J'=\langle g_1, \ldots, g_{n-1} \rangle$. Then  $T_{\Delta}$ is Koszul for all $\Delta$ with $c > 0$ and $e >0$ (\cite[Proposition 2.10]{Neeraj}(i) which holds for all $c > 0$). Thus to prove the theorem, by \cite[Lemma $6.6$]{CHTV}, it suffices to show that reg$_{T_{\Delta}}\mathcal{R}(I)_{\Delta} \leq 1$  since $\mathcal{R}(I)=T/g_nT$.
To this end, we only show some of the steps explicitly since the proof is identical to  \cite[Theorem 3.1]{Neeraj} and \cite[Corollary 3.3]{CC}. To see the steps in detail, refer to the cited results.

From the assumptions, since $g_ny_n=0$ in $T$, we can consider the following complex.

\begin{equation} \label{complex}
\mathbb{F.}:  \cdots \stackrel{y_n}\longrightarrow T(-2r,-3) \stackrel{g_n} \longrightarrow T(-r,-2) \stackrel{y_n}\longrightarrow T(-r,-1) \stackrel{g_n} \longrightarrow T \longrightarrow 0.     
\end{equation}

Then the homology of $\mathbb{F}$ can be seen as follows: 

$\text{H}_k(\mathbb{F})= \left \{ \begin{array}{ll}
\mathcal{R}(I)           & \text{ if }k=0 \\
0                    & \text{ if }k=2i \text{ and }i>0 \\
   \bigl[ S / {\langle  y_1, \ldots, y_n \rangle} \bigr] (-(i+1)r-r,-2i-1) & \text{ if }k=2i+1 \text{ and }i \geq 0 
    \end{array}
\right.
$

Clearly $H_{2i}(\mathbb{F}_{\Delta})=0$ and $H_{0}(\mathbb{F}_{\Delta})=\mathcal{R}(I)_{\Delta}$. Taking the $j$-th degree component of $H_{2i+1}(\mathbb{F}_{\Delta})$ we get,
$$ (H_{2i+1}(\mathbb{F})_{\Delta})_j= [\, S/ \langle y_1, \ldots, y_n \rangle ]\,(-(i+2)r+jc,-2i-1+je).$$ Applying \cite[Lemma 2.7]{Neeraj}, we have,
$$ \text{reg}_{T_{\Delta}}(\mathcal{R}(I)_{\Delta}) \leq \text{sup}\{ \mu, \nu \}.$$
where $\mu= \text{sup}\{ \text{reg}_{T_{\Delta}}(F_k)_{\Delta} -k: k \geq 0 \}$, $F_k$ being the $k$-th module in the Complex (\ref{complex}) and
$\nu=\text{sup}\{ \text{reg}_{T_{\Delta}}H_k(\mathbb{F}_\Delta) -(k+1): k \geq 1 \}$.

Let $c > 0$ and $e >0$.
Then we have the following:
\begin{enumerate}
    \item $\mu \leq 1$. 
    \item Taking the $j$-th degree component of $H_{2i+1}(\mathbb{F})_{\Delta}$, it is observed that $(H_{2i+1}(\mathbb{F})_{\Delta})_j=0$ if $-(2i+1)+j \geq 1 $, that is, if $j \geq 2i+2$. So the largest non-zero degree component of $H_{2i+1}(\mathbb{F}_{\Delta})$ is atmost $2i+1$. Since $H_{2i+1}(\mathbb{F}_{\Delta})=H_{2i+1}(\mathbb{F})_{\Delta}$, by \cite[Equation (1)]{Neeraj} we get reg$_{T_{\Delta}}H_k(\mathbb{F}_{\Delta}) \leq$reg$_{S'}H_k(\mathbb{F}_{\Delta}) \leq k$
 where $S'$ is a polynomial ring such that there exists a surjection from $S'$ onto $T_{\Delta}$. Hence $\nu \leq -1$.
\end{enumerate}
Therefore $\mathcal{R}(I)_{\Delta}$ is Koszul for $c>0$ and $e >0$.

\end{proof}

It is known that the Rees algebra of the maximal generic Pfaffians is Cohen-Macaulay.
Following are some observations made regarding the Cohen-Macaulayness of the diagonals of the same.

\begin{rem}
Let $X$ be the generic skew-symmetric matrix of odd order $n$ and $B=K[X]$ with $I=\text{Pf}_{n-1}(X)$.

\begin{enumerate}
    \item Then $\mathcal{R}(I)_{\Delta}$ is Cohen-Macaulay for $c \gg 0$ and $e>0$.
    This can be seen as a consequence of $I$ being generated by a $d$-sequence which implies vanishing of y-regularity of $\mathcal{R}(I)$ \cite[Corollary 3.2]{Rom1} and 
    \cite[Corollary 3.14]{CHTV}.
    \item In particular, when  $n=3,5$, $\mathcal{R}(I)_{\Delta}$ is Cohen-Macaulay if $c \geq 2e+1$, $c,e >0$ which is a consequence of the repeated application of \cite[Corollary 18.6]{Eisenbud_text}, Lemma \ref{CM_criteria} and Lemma \ref{dimension}.  
\end{enumerate}
\end{rem}

\section{Pfaffian ideals of sparse skew-symmetric matrices} \label{sparse_skew}

For a generic skew-symmetric matrix $X$ of odd order $n$, computations suggest that as $n$ increases, the number of generators for $I=\text{Pf}_{n-1}(X)$ increases largely. Moreover, the expression for the generators of $I$ becomes too complex, thereby making the study of the minimal graded free resolutions of the corresponding Rees algebras hard.
Thus, for the generators of a Pfaffian ideal and the defining ideal of the Rees algebra to satisfy some properties, it makes sense to focus on the sparse form of skew-symmetric matrices.

Note that we primarily focus on the maximal order Pfaffians since an ideal $I$ generated by the non-maximal order Pfaffians most often leads to the defining relations of the Rees algebra being of total degree greater than $2$. For example,

    {\footnotesize{
    $$X= \begin{bmatrix}
                          0      & x_{12} &   0    &   x_{14}      &0 &0 &0\\
                       -x_{12}   &   0    & x_{23} &   0           &x_{25}  
                       & 0 & 0 \\
                          0      &-x_{23} &   0    & x_{34}        &0 & x_{36} &0 \\
                       -x_{14}   &   0    &-x_{34} &   0           &x_{45}& 0 & x_{47}\\
                          0      &-x_{25} &   0    & -x_{45}       &0 & x_{56} &0   \\
                          0 & 0     &-x_{36} &   0    & -x_{56}       &0 & x_{67}  \\
                          0      &0 &   0    & -x_{47}       &0 & -x_{67} &0   \\
                      \end{bmatrix}_{7 \times 7}$$\\
                      }}

In the above case, for $I=$Pf$_4(X)$ we get, $\mathcal{R}(I)$ to have $52$ generators of bidegree $(1,1)$, $14$ generators of bidegree $(0,2)$ and $3$ generators of bidegree $(0,3)$. Hence, it is clearly not generated by quadrics (with respect to total degree). 

\vspace{2mm}

In an attempt to study the sparse skew-symmetric matrices, we first focus on the Pfaffians of the tridiagonal matrices of the form given in Theorem \ref{tridiagonal}. Following are some of the terms which will be used for the same. 

\vspace{1mm}
A homogeneous ideal $I$ of a standard graded ring $A$ is of Gr\"obner linear type if the ideal is of linear type with the linear relations of the defining ideal of $\mathcal{R}(I) \cong K[X,Y]/J$ forming a Gr\"obner basis with respect to some monomial order on $K[X,Y]$. 
Let a sequence of monomials $m_1, \ldots, m_s$ in the set of indeterminates $x_1, \ldots,x_n$ be denoted as $m_i=x_1^{a_{i_1}} \cdots x_n^{a_{i_n}}$ with $a_{i_1} >0, \ldots, a_{i_n}>0$. Then such a sequence of monomials is said to form an \textit{$M$-sequence} if, for all $1 \leq i \leq s$, there exists a total order on the set of indeterminates, say $x_1 < \cdots < x_n$, such that whenever $x_k | m_j$ with $1 \leq k \leq n$ and $i<j$, $x_k^{a_{i_k}}\cdots x_n^{a_{i_n}}|m_j$ (cf. \cite{CN}). The authors in \cite[Theorem 2.4 (i)]{CN} prove that the ideals generated by $M$-sequences are of Gr\"obner linear type.
For a monomial $m$ and an indeterminate $x$, let $O_x(m)$ denote the exponent of $x$ in $m$. Then a sequence of monomials $m_1, \ldots, m_s$ in the set of indeterminates $X$ is said to be of \textit{interval type} if for all $1 \leq i <j \leq s$ and $x|\text{gcd}(m_i,m_j)$, one has $O_x(m_i) \leq O_x(m_k)$ for all $i \leq k \leq j$ (cf. \cite{CN}). It is known that a sequence of interval type implies an $M$-sequence \cite[Proposition 3.2]{CN}.

\vspace{1.5mm}

\begin{lem} \label{determinant}
Let $\footnotesize{X= \begin{bmatrix}
                          0      & x_1 &   0    &   0      & \ldots&0  &0 \\
                       -x_1   &   0    & x_2 &   0      & \ldots &0 &0 \\
                          0      &-x_2 &   0    & x_3   & \ldots &0 &0 \\
                          0      &   0    &-x_3 &   0      & \ldots & 0  &0\\
                        \vdots   & \vdots &\vdots  & \vdots   & \ddots  &\vdots  & \vdots  \\
                          0      &   0    &   0    &0& \ldots  & 0    &x_{n-1} \\
                          0      &   0    &   0    &   0  & \ldots &-x_{n-1}&0   \\
                      \end{bmatrix}_{n \times n}}$,
where $n \in \NN$.

Then,

\begin{enumerate}
    \item det $X=0$, if $n$ is odd.
    \item det $X= \prod_{i=2k+1}{x_i}^2$, $k=0,\ldots,\dfrac{n-2}{2}$, if $n$ is even.
\end{enumerate}
\end{lem}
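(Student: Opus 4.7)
The plan is to treat the two parts separately. For part (1), I would invoke the standard identity for determinants of skew-symmetric matrices: since $X^T = -X$, one has $\det X = \det X^T = \det(-X) = (-1)^n \det X$, and when $n$ is odd this forces $2 \det X = 0$. As the ground field has characteristic zero, this yields $\det X = 0$ with no further computation.

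For part (2), the plan is to induct on the even integer $n$, with base case $n=2$ where $\det X = x_1^{\,2}$ is immediate. For the inductive step with $n \geq 4$, the key move is a single Laplace expansion along the first column of $X$. Because $X$ is tridiagonal, the only nonzero entry in that column is the $(2,1)$-entry $-x_1$, so
$$\det X \;=\; (-1)^{2+1}(-x_1)\,\det M \;=\; x_1 \det M,$$
where $M$ is the $(n-1)\times(n-1)$ submatrix obtained by deleting row $2$ and column $1$. A direct inspection shows that the first row of $M$ is $(x_1,0,\dots,0)$, so expanding $\det M$ once more along this row gives $\det M = x_1 \det M'$, with $M'$ the $(n-2)\times(n-2)$ matrix obtained by further deleting the first row and column of $M$.

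The observation that closes the induction is that $M'$ is again tridiagonal skew-symmetric of the same form, now of order $n-2$, with superdiagonal entries $x_3, x_4, \dots, x_{n-1}$. Granting this, the inductive hypothesis supplies $\det M' = x_3^{\,2} x_5^{\,2} \cdots x_{n-1}^{\,2}$, and assembling the pieces yields
$$\det X \;=\; x_1^{\,2} \det M' \;=\; \prod_{k=0}^{(n-2)/2} x_{2k+1}^{\,2},$$
as claimed. The only real obstacle I foresee is the careful row-and-column bookkeeping needed to confirm that $M'$ has exactly this shape after the two successive deletions; an off-by-one slip in the re-indexing could easily spoil the argument, but this is routine once one tracks a small case (say $n=6$) to anchor the pattern.
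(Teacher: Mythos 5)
Your proof is correct and follows essentially the same route as the paper: part (1) via the standard skew-symmetry identity $\det X=(-1)^n\det X$, and part (2) by induction with a double Laplace expansion that extracts a squared variable and reduces to the same tridiagonal form of order $n-2$. The only (immaterial) difference is that you expand along the first column and row, picking up $x_1^2$ and leaving the block on $x_3,\dots,x_{n-1}$, whereas the paper expands along the last column and row, picking up $x_{n-1}^2$.
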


\begin{proof}

Let $X$ be a matrix of the above form.
\begin{enumerate}
    \item Follows from the property of the matrix being skew-symmetric.
    \item The proof follows by induction on n. Clearly, for $n=2,4$, the determinant is given by $x_1^2$ and $x_1^2x_3^2$, respectively. Thus the statement holds in these cases. Let $n$ be an even integer such that $n>4$, then the Laplace expansion of the determinant along the last column and then the last but one column gives det $X=(-x_{n-1})(-x_{n-1})$det $X'$ where $X'$ is a matrix of order $n-2$ whose determinant is given by the induction hypothesis. Thus, the required result is obtained.
\end{enumerate}
\end{proof}

\vspace{1mm}

\begin{thm}\label{tridiagonal}

Let $\footnotesize{X_3= \begin{bmatrix}
                          0      & x_{12} &   0    &   0      & \ldots& 0  &0 \\ 
                       -x_{12}   &   0    & x_{23} &   0      & \ldots & 0  &0 \\
                          0      &-x_{23} &   0    & x_{34}   & \ldots & 0  &0 \\
                         0      &   0    &-x_{34} &   0      & \ldots &0  &0\\
                        \vdots   & \vdots &\vdots  & \vdots   & \ddots  & \vdots & \vdots   \\
                          0      &   0    &   0    &0 &  \ldots &  0    &x_{n-1 \; n} \\
                          0      &   0    &   0    &   0 & \ldots & -x_{n-1 \; n}&0   \\
                      \end{bmatrix}_{n \times n}}$\\
\vspace{4mm}
\noindent where $n=2r+1,\, r \in \NN \cup \{ 0\}$. Then $I=Pf_{n-1}(X_3)$ is a monomial ideal of Gr\"obner linear type.  
\end{thm}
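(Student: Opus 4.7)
My plan is to: (i) compute the Pfaffian generators explicitly using the block-decomposition of tridiagonal matrices, (ii) verify the interval-type property to obtain Gr\"obner linear type, and (iii) exhibit a regular sequence of $r$ quadratic binomials generating the defining ideal of $\mathcal{R}(I)$, which will yield both the Cohen-Macaulay and Koszul properties simultaneously.

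First, deleting the $l$-th row and column of $X_3$ produces a block-diagonal skew-symmetric matrix with blocks of sizes $l-1$ and $n-l$, each tridiagonal of the form in Lemma \ref{determinant}. By that lemma the Pfaffian vanishes when either block has odd size; since $n=2r+1$ is odd, both blocks are even precisely when $l$ is odd. Writing $y_i := x_{i,i+1}$, the $r+1$ nonzero generators are (up to sign) the squarefree monomials of degree $r$
$$u_k \;=\; (y_1 y_3 \cdots y_{2k-1})(y_{2k+2} y_{2k+4} \cdots y_{2r}), \qquad k = 0, 1, \ldots, r.$$
For each $y_i$, the set $\{k : y_i \mid u_k\}$ is an interval---namely $\{(i+1)/2, \ldots, r\}$ for $i$ odd and $\{0, \ldots, (i-2)/2\}$ for $i$ even---so the sequence $u_0, \ldots, u_r$ is of interval type. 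By \cite[Proposition 3.2]{CN} it is an $M$-sequence, and by \cite[Theorem 2.4(i)]{CN}, $I$ is of Gr\"obner linear type. The complements of the supports $\text{supp}(u_k)$ turn out to be exactly the maximal independent sets of the bipartite path
$$y_2 \,-\, y_1 \,-\, y_4 \,-\, y_3 \,-\, y_6 \,-\, y_5 \,-\, \cdots \,-\, y_{2r} \,-\, y_{2r-1},$$
so $I$ coincides with the vertex cover ideal of this unmixed bipartite graph, every minimal cover having size $r$.

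For the Koszul and Cohen-Macaulay properties I would pass to $S := K[y_1, \ldots, y_{2r}, t_0, \ldots, t_r]$ and use the identities $y_{2k+1} u_k = y_{2k+2} u_{k+1}$ to produce the $r$ quadratic binomials
$$f_k \;=\; y_{2k+1} t_k - y_{2k+2} t_{k+1}, \qquad k = 0, \ldots, r-1.$$
A telescoping induction on $j-i$ shows that each Taylor binomial $(y_{2i+1} \cdots y_{2j-1})\, t_i - (y_{2i+2} \cdots y_{2j})\, t_j$ for $i<j$ lies in $\langle f_0, \ldots, f_{r-1}\rangle$, and since Gr\"obner linear type forces the defining ideal $J$ of $\mathcal{R}(I)=S/J$ to be generated by Taylor binomials, this pins down $J = \langle f_0, \ldots, f_{r-1}\rangle$. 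With a suitable lexicographic term order, the leading monomials $y_{2k+1} t_k$ involve pairwise disjoint variables, so by Lemma \ref{regular_seq} the $f_k$ form a regular sequence; hence $\mathcal{R}(I)$ is a quadratic complete intersection, in particular Cohen-Macaulay. The same coprimality makes $\{f_k\}$ a quadratic Gr\"obner basis, so $\mathcal{R}(I)$ is G-quadratic and therefore Koszul.

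The principal obstacle I anticipate is the clean verification that $\langle f_0, \ldots, f_{r-1}\rangle$ already exhausts all of $J$: matching heights gives only equality of radicals, so the telescoping must be carried out carefully to guarantee that every non-consecutive Taylor binomial reduces to combinations of the $f_k$. The bookkeeping is mild but essential: given the inductive hypothesis that $(y_{2i+1} \cdots y_{2j-3})\, t_i - (y_{2i+2} \cdots y_{2j-2})\, t_{j-1}$ is in the ideal, multiply it by $y_{2j-1}$, multiply $f_{j-1}$ by $y_{2i+2} \cdots y_{2j-2}$, and add to obtain the desired step.
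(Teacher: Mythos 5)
Your argument follows essentially the same route as the paper: generators of $\text{Pf}_{n-1}(X_3)$ via Lemma \ref{determinant}, interval type $\Rightarrow$ $M$-sequence $\Rightarrow$ Gr\"obner linear type by \cite{CN}, identification of the defining ideal of $\mathcal{R}(I)$ with $r$ quadratic binomials whose leading terms are pairwise coprime, hence a complete intersection (Cohen--Macaulay) with a quadratic Gr\"obner basis (Koszul). Your telescoping verification that the non-consecutive Taylor binomials lie in $\langle f_0,\ldots,f_{r-1}\rangle$ is in fact more careful than the paper, which passes from relation (\ref{D.Taylor}) to the $r$ consecutive binomials without comment; this is the one step worth spelling out, and you do it correctly.

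One aside in your write-up is false, however. The complements of the supports of the $u_k$ are \emph{not} the maximal independent sets of the path $y_2 - y_1 - y_4 - y_3 - \cdots - y_{2r} - y_{2r-1}$ once $r \geq 3$: that path has $\{y_1, y_6\}$ (i.e.\ $\{v_2,v_5\}$ in path order) as a maximal independent set of size $2$, so it is not unmixed and has an extra minimal vertex cover of size $r+1$. The correct graph, used in the corollary following this theorem in the paper, is the staircase (Ferrers-type) bipartite graph in which the left vertex $(2j-1\;2j)$ is joined to \emph{every} right vertex $(2k\;2k+1)$ with $k \geq j$; for $r=2$ this happens to be a path, which is presumably what misled you. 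Since the vertex-cover identification is not used anywhere in your proof of Gr\"obner linear type, Koszulness, or Cohen--Macaulayness, the theorem itself is unaffected, but the claim as stated should be corrected or removed.
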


\begin{note}
In the case of a generic skew-symmetric matrix, it is known from \cite[Theorem 2.2]{Baetica} that the Pfaffian ideal $I=$Pf$_{n-1}(X)$ is of linear type. However results in that direction cannot be directly applied in sparse cases like the ones above. Hence we will separately show that the Pfaffian ideals, in this case, are of linear type. 
\end{note}

\begin{proof}
Using Lemma \ref{determinant}, we get the generators of the Pfaffian ideal $I=\text{Pf}_{n-1}(X_3)$ to have the following form, 
\begin{equation} \label{tridiagonal_paffian}
p_1= \prod_{k=1}^r x_{2k\;2k+1},\text{ }p_i=x_{12}\prod_{j<i}x_{j \; j+1} \prod_{i<k \leq 2r} x_{k \; k+1}, 
\end{equation}
where   $i=2p+1,\, 1 \leq p \leq r ; \; 
j=1+2\ell,\,\ell\geq 1; \; k=i+1+2m, \, m \geq 0$.
That is, we have $ \{ p_i: i=2p+1, \, 0 \leq p \leq r \}$ to be the generating set of the Pfaffian ideal $I=$Pf$_{n-1}(X_3).$ 

Now for $1 \leq k < m \leq r+1$, assume that $x_{j\;j+1}|\text{gcd}(p_k,p_m)$ for $1 \leq j \leq n-1$. Then there are two possibilities.
\begin{enumerate}
    \item $j < k.$\\
    This implies that $j$ is odd and $j <\ell$ for all $k \leq\ell\leq m$. Thus $x_{j \; j+1}|p_l$.
    \item $j >k.$\\
    Then we have $j$ to be even. Hence $j >m$ and so $j>\ell$ for all $k \leq\ell\leq m$. Thus $x_{j \; j+1}|p_{\ell}$. 
\end{enumerate}

Since the generators are squarefree monomials, this implies that $\{ p_i: i=2p+1, \, 1 \leq p \leq r \}$ forms a sequence of interval type. In particular, it forms an $M$-sequence with respect to some total order and thus is of Gr\"obner linear type.

\end{proof}

\begin{note}
For $n,m \in \NN$, let $(n \; m)$ be the pair of integers which denotes the indices of the indeterminates as entries in the matrix $X_3$.
\end{note}

\begin{cor} \label{vertex_cover_ideal}
The ideal of Pfaffians Pf$_{n-1}(X_3)$ can be seen as the vertex cover ideal of an unmixed bipartite graph.
\end{cor}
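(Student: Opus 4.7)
The plan is to exhibit an explicit bipartite graph $G$ whose vertex cover ideal equals $I=\text{Pf}_{n-1}(X_3)$, and to deduce unmixedness from the classification of its minimal vertex covers. Setting $z_i:=x_{i\;i+1}$ for $1\le i\le 2r$, I would read off from (\ref{tridiagonal_paffian}) that the $r+1$ generators are squarefree degree-$r$ monomials supported on the vertex sets
$$C_0=\{z_2,z_4,\ldots,z_{2r}\},\qquad C_p=\{z_1,z_3,\ldots,z_{2p-1}\}\cup\{z_{2p+2},\ldots,z_{2r}\}\quad(1\le p\le r).$$
I would let $G$ be the graph on vertex set $V=\{z_1,\ldots,z_{2r}\}$, bipartitioned into odd- and even-indexed vertices, with edge set $E(G)=\{\{z_i,z_j\}:i\text{ odd},\ j\text{ even},\ i<j\}$. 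Bipartiteness is immediate, and the perfect matching $\{z_{2i-1},z_{2i}\}_{i=1}^{r}$ together with K\"onig's theorem shows that the vertex cover number of $G$ equals $r$.

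Next I would verify that each $C_p$ is a minimal vertex cover of $G$. For the cover condition, an edge $\{z_i,z_j\}$ with $i$ odd $<j$ even either has $i\le 2p-1$, placing $z_i\in C_p$, or else $i\ge 2p+1$ and hence $j\ge 2p+2$, placing $z_j\in C_p$; minimality follows because deleting any vertex of $C_p$ uncovers the edge of the above perfect matching incident to it. In particular $|C_p|=r$ for every $p$.

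The crux is to show $C_0,\ldots,C_r$ are the \emph{only} minimal vertex covers of $G$. Given a minimal cover $C$, write $S,T$ for its odd and even parts. Minimality forces (i) for every $z_j\in S$ some even $k>j$ satisfies $z_k\notin T$, and (ii) for every $z_k\in T$ some odd $i<k$ satisfies $z_i\notin S$. Combined with the cover condition that each pair (odd $i<$ even $j$) hits $S$ or $T$, these constraints propagate rigidly along the index sequence: one shows that $j^\ast:=\max\{k\text{ even}:z_k\notin T\}$ and $i^\ast:=\min\{i\text{ odd}:z_i\notin S\}$ must satisfy $i^\ast=j^\ast+1$, whence $S=\{1,3,\ldots,j^\ast-1\}$ and $T=\{j^\ast+2,\ldots,2r\}$, giving $C=C_{j^\ast/2}$.

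The main obstacle is precisely this last rigidity argument; everything else is routine verification. Once the classification is in hand, $I$ coincides by definition with the vertex cover ideal of $G$, and unmixedness follows from the uniform size $|C_p|=r$. Combined with Theorem \ref{tridiagonal} and Remark \ref{Koszulness_linear}(iii), the Koszulness of $\mathcal{R}(I)$ then yields linear resolutions of all powers of this vertex cover ideal, as promised in the introduction.
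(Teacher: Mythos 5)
Your graph is the same one the paper draws: after the substitution $z_i=x_{i\,i+1}$, the paper's figure is precisely the bipartite graph on the odd- and even-indexed vertices with an edge $\{z_i,z_j\}$ whenever $i$ is odd, $j$ is even and $i<j$, and your sets $C_0,\dots,C_r$ are exactly the paper's list of minimal vertex covers. The difference is in how the classification is proved: the paper inducts on $r$, extending each minimal cover of the graph on $2(r-1)$ vertices by one of the two new vertices, whereas you classify an arbitrary minimal cover directly through the extremal indices $i^*$ and $j^*$. Your route is arguably cleaner, since it avoids the implicit claim in the induction that every minimal cover of the larger graph arises from one of the smaller graph, but two points must be filled in before it is complete. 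First, $i^*$ and $j^*$ are undefined when $S$ is the whole odd part or $T$ is the whole even part; these cases need separate (easy) treatment: if $T$ is all of the even part, then condition (i) forces $S=\emptyset$ and $C=C_0$; if $S$ is all of the odd part, every edge is already covered, so $T=\emptyset$ and $C=C_r$; and once both are excluded one also gets $j^*\le 2r-2$, since $z_{2r}\notin T$ would force every odd vertex into $S$. Second, the ``rigidity'' is asserted rather than argued; the argument is short and should be written out: $i^*>j^*$, else the edge $\{z_{i^*},z_{j^*}\}$ is uncovered; if $i^*>j^*+1$, then $j^*+1\le 2r-1$ is a valid odd index with $z_{j^*+1}\in S$ by minimality of $i^*$, and condition (i) applied to $z_{j^*+1}$ yields an even $k>j^*+1>j^*$ with $z_k\notin T$, contradicting the maximality of $j^*$; hence $i^*=j^*+1$, after which maximality of $j^*$ together with (i) gives $S=\{z_1,z_3,\dots,z_{j^*-1}\}$ exactly, and minimality of $i^*$ together with (ii) gives $T=\{z_{j^*+2},\dots,z_{2r}\}$ exactly, so $C=C_{j^*/2}$. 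With these details supplied your proof is correct, and unmixedness follows at once since every $C_p$ has cardinality $r$; the remaining identifications (cover ideal equals $I$, linear powers via Theorem \ref{tridiagonal}) are as in the paper.
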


\begin{proof}
Let $G$ be the following bipartite graph.

\begin{center}

\tikzset{every picture/.style={line width=0.75pt}} 

\begin{tikzpicture}[x=0.75pt,y=0.75pt,yscale=-1,xscale=1]

\draw    (225.68,19.3) -- (344.55,19.6) ;
\draw    (225.68,54.61) -- (344.55,54.91) ;
\draw    (225.68,90.83) -- (344.55,91.13) ;
\draw    (225.68,144.25) -- (344.55,144.56) ;
\draw   (226.14,18.82) .. controls (226.15,19.08) and (225.94,19.3) .. (225.68,19.3) .. controls (225.41,19.3) and (225.19,19.09) .. (225.18,18.82) .. controls (225.17,18.56) and (225.38,18.35) .. (225.65,18.35) .. controls (225.91,18.35) and (226.14,18.56) .. (226.14,18.82) -- cycle ;
\draw   (345.01,19.12) .. controls (345.02,19.38) and (344.81,19.6) .. (344.55,19.6) .. controls (344.28,19.6) and (344.06,19.39) .. (344.05,19.13) .. controls (344.04,18.86) and (344.25,18.65) .. (344.52,18.65) .. controls (344.78,18.65) and (345,18.86) .. (345.01,19.12) -- cycle ;
\draw   (226.14,54.13) .. controls (226.15,54.4) and (225.94,54.61) .. (225.68,54.61) .. controls (225.41,54.61) and (225.19,54.4) .. (225.18,54.14) .. controls (225.17,53.88) and (225.38,53.66) .. (225.65,53.66) .. controls (225.91,53.66) and (226.14,53.87) .. (226.14,54.13) -- cycle ;
\draw   (344.55,54.91) .. controls (344.56,55.18) and (344.35,55.39) .. (344.08,55.39) .. controls (343.82,55.39) and (343.59,55.18) .. (343.59,54.92) .. controls (343.58,54.66) and (343.79,54.44) .. (344.05,54.44) .. controls (344.32,54.44) and (344.54,54.65) .. (344.55,54.91) -- cycle ;
\draw   (226.18,91.3) .. controls (226.18,91.57) and (225.98,91.78) .. (225.71,91.78) .. controls (225.44,91.78) and (225.22,91.57) .. (225.21,91.31) .. controls (225.21,91.05) and (225.41,90.83) .. (225.68,90.83) .. controls (225.94,90.83) and (226.17,91.04) .. (226.18,91.3) -- cycle ;
\draw   (345.04,91.61) .. controls (345.05,91.87) and (344.84,92.08) .. (344.58,92.08) .. controls (344.31,92.09) and (344.09,91.87) .. (344.08,91.61) .. controls (344.07,91.35) and (344.28,91.13) .. (344.55,91.13) .. controls (344.81,91.13) and (345.03,91.34) .. (345.04,91.61) -- cycle ;
\draw   (226.14,109.37) .. controls (226.15,109.63) and (225.94,109.84) .. (225.68,109.85) .. controls (225.41,109.85) and (225.19,109.63) .. (225.18,109.37) .. controls (225.17,109.11) and (225.38,108.89) .. (225.65,108.89) .. controls (225.91,108.89) and (226.14,109.1) .. (226.14,109.37) -- cycle ;
\draw   (226.14,119.33) .. controls (226.15,119.59) and (225.94,119.8) .. (225.68,119.81) .. controls (225.41,119.81) and (225.19,119.6) .. (225.18,119.33) .. controls (225.17,119.07) and (225.38,118.86) .. (225.65,118.85) .. controls (225.91,118.85) and (226.14,119.06) .. (226.14,119.33) -- cycle ;
\draw   (226.14,127.48) .. controls (226.15,127.74) and (225.94,127.95) .. (225.68,127.96) .. controls (225.41,127.96) and (225.19,127.74) .. (225.18,127.48) .. controls (225.17,127.22) and (225.38,127) .. (225.65,127) .. controls (225.91,127) and (226.14,127.21) .. (226.14,127.48) -- cycle ;
\draw   (345.01,109.37) .. controls (345.02,109.63) and (344.81,109.84) .. (344.55,109.85) .. controls (344.28,109.85) and (344.06,109.63) .. (344.05,109.37) .. controls (344.04,109.11) and (344.25,108.89) .. (344.52,108.89) .. controls (344.78,108.89) and (345,109.1) .. (345.01,109.37) -- cycle ;
\draw   (345.01,119.33) .. controls (345.02,119.59) and (344.81,119.8) .. (344.55,119.81) .. controls (344.28,119.81) and (344.06,119.6) .. (344.05,119.33) .. controls (344.04,119.07) and (344.25,118.86) .. (344.52,118.85) .. controls (344.78,118.85) and (345,119.06) .. (345.01,119.33) -- cycle ;
\draw   (345.01,127.48) .. controls (345.02,127.74) and (344.81,127.95) .. (344.55,127.96) .. controls (344.28,127.96) and (344.06,127.74) .. (344.05,127.48) .. controls (344.04,127.22) and (344.25,127) .. (344.52,127) .. controls (344.78,127) and (345,127.21) .. (345.01,127.48) -- cycle ;
\draw    (225.68,181.38) -- (344.55,181.68) ;
\draw   (226.18,144.73) .. controls (226.18,144.99) and (225.98,145.2) .. (225.71,145.21) .. controls (225.44,145.21) and (225.22,144.99) .. (225.21,144.73) .. controls (225.21,144.47) and (225.41,144.25) .. (225.68,144.25) .. controls (225.94,144.25) and (226.17,144.46) .. (226.18,144.73) -- cycle ;
\draw   (226.18,181.85) .. controls (226.18,182.11) and (225.98,182.33) .. (225.71,182.33) .. controls (225.44,182.33) and (225.22,182.12) .. (225.21,181.86) .. controls (225.21,181.59) and (225.41,181.38) .. (225.68,181.38) .. controls (225.94,181.38) and (226.17,181.59) .. (226.18,181.85) -- cycle ;
\draw   (345.51,144.55) .. controls (345.52,144.81) and (345.31,145.03) .. (345.04,145.03) .. controls (344.78,145.03) and (344.56,144.82) .. (344.55,144.56) .. controls (344.54,144.29) and (344.75,144.08) .. (345.01,144.08) .. controls (345.28,144.08) and (345.5,144.29) .. (345.51,144.55) -- cycle ;
\draw   (344.55,181.68) .. controls (344.56,181.94) and (344.35,182.16) .. (344.08,182.16) .. controls (343.82,182.16) and (343.59,181.95) .. (343.59,181.68) .. controls (343.58,181.42) and (343.79,181.21) .. (344.05,181.21) .. controls (344.32,181.2) and (344.54,181.42) .. (344.55,181.68) -- cycle ;
\draw    (226.14,18.82) -- (344.08,55.39) ;
\draw    (225.68,19.3) -- (344.55,91.13) ;
\draw    (225.18,18.82) -- (345.51,144.55) ;
\draw    (225.68,19.3) -- (344.05,181.21) ;
\draw    (225.18,54.14) -- (344.55,91.13) ;
\draw    (225.68,54.61) -- (345.51,144.55) ;
\draw    (225.68,54.61) -- (344.55,181.68) ;
\draw    (225.21,91.31) -- (345.51,144.55) ;
\draw    (225.71,91.78) -- (344.55,181.68) ;
\draw    (225.21,144.73) -- (344.05,181.21) ;

\draw (201.01,10.91) node [anchor=north west][inner sep=0.75pt]   [align=left] {{\scriptsize (12)}};
\draw (346.22,10.01) node [anchor=north west][inner sep=0.75pt]   [align=left] {{\scriptsize (23)}};
\draw (346.22,45.23) node [anchor=north west][inner sep=0.75pt]   [align=left] {{\scriptsize (45)}};
\draw (346.22,80.35) node [anchor=north west][inner sep=0.75pt]   [align=left] {{\scriptsize (67)}};
\draw (346.71,171.99) node [anchor=north west][inner sep=0.75pt]   [align=left] {{\scriptsize (n-1 n)}};
\draw (346.28,134.87) node [anchor=north west][inner sep=0.75pt]   [align=left] {{\scriptsize (n-3 n-2)}};
\draw (201.01,46.13) node [anchor=north west][inner sep=0.75pt]   [align=left] {{\scriptsize (34)}};
\draw (200.01,82.45) node [anchor=north west][inner sep=0.75pt]   [align=left] {{\scriptsize (56)}};
\draw (179.03,137.77) node [anchor=north west][inner sep=0.75pt]   [align=left] {{\scriptsize (n-4 n-3)}};
\draw (178.03,175.09) node [anchor=north west][inner sep=0.75pt]   [align=left] {{\scriptsize (n-2 n-1)}};

\end{tikzpicture}

\end{center}

Then it suffices to show that the set of all minimal vertex covers of $G$ are given by $\{ (2k\;2k+1) \}$, $1 \leq k \leq r$ and $\{ (1\,2),(j\;j+1),(k\;k+1) \}$, where $i=2p+1,\, 1 \leq p \leq r ; \; 
j=1+2l,\,\ell\geq 1; \; k=i+1+2m, \, m \geq 0$.
We prove this by induction on $r$ where the order of the matrix is $n=2r+1$.
For $r=1$ and $r=2$, the minimal vertex covers are given by $\{(1\,2)\},\{ (2\,3) \}$ and $\{ (1\,2),(3\,4)\}$, $\{ (1\,2),(4\,5)\}$, $\{ (2\,3),(4\,5)\}$ respectively. Then the result is true for the base cases.
Now let $r>2$. By induction hypothesis, the minimal vertex covers of $G$ on $2(r-1)$ vertices will have the form $\{ (2k\;2k+1) \}$,  $1 \leq k \leq r-1$ and $i=2p+1,\, 1 \leq p \leq r-1 ; \; 
j=1+2\ell,\,\ell\geq 1; \; k=i+1+2m, \, m \geq 0$. Observe that, only one vertex cover say $u= \{(\ell\;\ell+1)\}$, $\ell=2m+1$, $0 \leq m \leq r-2$ contains $(2r-3\;2r-2)$ and the rest of them contains $(2r-2\;2r-1)$. Then $u$ can be extended  to form a vertex cover of $G$ on $2r$ vertices by adjoining either $(2r-1\;2r)$ or $(2r\;2r+1)$ whereas,  the other vertex covers containing $(2r-2\;2r-1)$ can be extended only by adding $(2r\;2r+1)$. This is because if instead $(2r-1\;2r)$ is added, the edge between $(2r-3\;2r-2)$ and $(2r \;2r+1)$ would have an empty intersection with the set. Thus the minimal vertex covers of $G$ on $n=2r+1$ vertices is given by $\{ (2k\;2k+1) \}$, $1 \leq  k \leq r$ and $\{ (1\,2),(j\;j+1),(k\;k+1) \}$, where  
$j=1+2\ell,\,\ell\geq 1; \; i=2p+1,\, 1 \leq p \leq r ; \; k=i+1+2m, \, m \geq 0$.

\noindent Thus, the ideal of Pfaffians Pf$_{n-1}(X_3)$ in Theorem \ref{tridiagonal} can be seen as the vertex cover ideal of the above unmixed bipartite graph $G$.

\end{proof}

\vspace{1mm}

 As a consequence of viewing the Pfaffian ideal as a vertex cover ideal, the following can be said regarding its ordinary and symbolic powers. Note that, here $I^{(s)}$ denotes the $s\text{-th}$-symbolic power of the ideal $I$ and for a monomial ideal $I$, with a linear order $m_1 \prec m_2 \prec \cdots \prec m_n$ on the minimal generators of $I$, it is said to have linear quotients with respect to $\prec$, if for all $1 \leq i \leq n-1$, $(\langle m_1, \ldots, m_i \rangle:m_{i+1})$ is generated by linear forms. It is proved that if $I$ is an equigenerated monomial ideal with linear quotients, then $I$ has a linear resolution \cite{Zheng}. For more details related to the graph-theoretic terms used in Corollary $\ref{powers}$, please refer \cite{SPOP}.

\begin{cor} \label{powers}
    For $I=$Pf$_{n-1}(X_3)$, $I^s$ and $I^{(s)}$ have linear quotients for all $s \geq 1$.
\end{cor}

\begin{proof}
   From Corollary \ref{vertex_cover_ideal}, $I$ is the vertex cover ideal of an unmixed bipartite graph $G$. In particular, it is the vertex cover ideal of a very well covered graph. Hence from \cite[Theorem 3.6]{SPOP}, $I^{(s)}$ has linear quotients for all $s \geq 1$. Moreover, since $G$ is a bipartite graph, from \cite[Corollary 2.6]{powers}, $I^s=I^{(s)}$ for all $s \geq 1$.
   
\end{proof}

\vspace{1.5mm}
\begin{thm}
Let $I=$Pf$_{n-1}(X_3)$. Then $\mathcal{R}(I)$ is Koszul and  Cohen-Macaulay.
\end{thm}

\begin{proof}
Let $\mathcal{R}(I) \cong S/J$ where $S=K[x_{12},\ldots,x_{n-1\; n},y_1,\ldots,y_{r+1}]$ and $J$ is the defining ideal of $\mathcal{R}(I)$. From the relation (\ref{D.Taylor}) in Section \ref{Preliminaries}, we get $J=\langle x_{i\;i+1}y_{j}-x_{i+1 \; i+2}y_{j+1}: \,i=2k+1, \, 0 \leq k \leq r-1, \, j= \frac{i+1}{2} \rangle$.
Then from \cite[Lemma 2.2]{reg-seq}, with respect to the graded reverse lexicographic term order induced by $x_{12} >x_{13}> \cdots   > x_{n-1\,n} > y_1 > y_2 > \cdots > y_{r+1}$, it can be concluded that $J$ is generated by a regular sequence and so is a complete intersection of quadrics. This implies $\mathcal{R}(I)$ is Cohen-Macaulay. Koszulness of $\mathcal{R}(I)$ follows from observing that the defining ideal of the Rees algebra is generated by a Gr\"obner basis of quadrics. 

\end{proof}

\vspace{1.5mm}

In \cite{KSSW}, the authors gave some bounds on $(c,e)$ for which the diagonal subalgebras of hypersurfaces become Cohen-Macaulay. In the following proposition, we try to extend it to the diagonals of algebras defined by  complete intersections.   
\vspace{1mm}

\begin{prop} \label{CM_diagonals}
Let $R=S/J$ be a standard bigraded $K$-algebra where $S=K[x_1,\ldots, x_n,y_1, \ldots, y_m]$ and $J$ a bihomogeneous ideal of $S$ generated by a regular sequence $\{ f_1, \ldots, f_\ell \}$ of bidegree $(a_i,b_i)$, $i =1, \ldots,\ell$. Let $d_1=max \{a_i: \, 1 \leq i \leq\ell \}$ and $d_2=max \{ b_i: \, 1 \leq i \leq\ell\}$.
Then for $c \geq -n+\ell d_1$ and $e \geq -m+\ell d_2$, $R_{\Delta}$ is Cohen-Macaulay for the following cases.

\begin{enumerate}
    \item For $\Delta=(c,e)$, if $R$ satisfies the property that all its associated primes do not contain $S_{(c,0)}$ and $S_{(0,e)}$. In particular, this is true if $R$ is a domain.
    \item If $R=R_1 \otimes_{K}R_2$ with $\text{dim}R=\text{dim} R_1+\text{dim} R_2$ where dim denotes Krull dimension.
\end{enumerate}

\end{prop}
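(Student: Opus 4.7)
The strategy is to use the Koszul complex of the regular sequence $\{f_1,\ldots,f_l\}$, pass to the $\Delta$-component, and combine the resulting $S_\Delta$-resolution with Lemma~\ref{CM_criteria} and the Depth Lemma. Specifically, since the $f_i$ form an $S$-regular sequence of bihomogeneous elements, the Koszul complex on them gives a minimal bigraded free resolution
\[
0 \to F_l \to F_{l-1} \to \cdots \to F_1 \to S \to R \to 0, \qquad F_i = \bigoplus_{|T|=i} S(-a_T,-b_T),
\]
where $a_T = \sum_{j\in T} a_j \le l d_1$ and $b_T = \sum_{j\in T} b_j \le l d_2$. Extracting the $\Delta$-component is exact on bigraded modules, so I obtain an exact complex of $S_\Delta$-modules $0 \to (F_l)_\Delta \to \cdots \to (F_1)_\Delta \to S_\Delta \to R_\Delta \to 0$ whose terms I will analyze individually.

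Under the hypotheses $c \ge -n + l d_1 \ge -n + a_T$ and $e \ge -m + l d_2 \ge -m + b_T$ (which hold for every subset $T$), Lemma~\ref{CM_criteria} shows that each summand $S(-a_T,-b_T)_\Delta$ is Cohen--Macaulay of dimension $n+m-1$ (with $S_\Delta$ itself covering the boundary case $T=\emptyset$). Splitting the above resolution into short exact sequences and iterating Lemma~\ref{depth_lemma} then produces
\[
\depth_{S_\Delta}(R_\Delta) \;\ge\; \min_{0\le i\le l}\bigl(\depth(F_i)_\Delta - i\bigr) \;=\; (n+m-1) - l.
\]

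Since $J$ is a complete intersection of height $l$, we have $\dim R = n+m-l$, so to conclude that $R_\Delta$ is Cohen--Macaulay it suffices to establish $\dim R_\Delta = n+m-l-1$; the depth bound above together with $\depth \le \dim$ will then force equality. Under hypothesis (2), $R_\Delta$ coincides with the Segre product $R_1^{(c)} *_K R_2^{(e)}$, and the classical Segre product dimension formula gives $\dim R_\Delta = \dim R_1 + \dim R_2 - 1 = \dim R - 1$. Under hypothesis (1), prime avoidance over the infinite field $K$, applied to the finite set $\operatorname{Ass}(R)$, yields nonzerodivisors $\xi \in S_{(c,0)}$ and $\eta \in S_{(0,e)}$ on $R$; their product lies in $(R_\Delta)_1$ and is a nonzerodivisor on $R_\Delta$, and a bigraded Hilbert-polynomial argument (the Hilbert polynomial of $R$ has total degree $\dim R - 2$, and the associated-primes hypothesis prevents its restriction to the ray $(ci,ei)$ from dropping in degree) then yields $\dim R_\Delta = \dim R - 1$. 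I anticipate the main obstacle to lie precisely in this last step --- cleanly deducing the non-degeneracy of the bigraded Hilbert polynomial along $\Delta$ from the associated-primes hypothesis of (1) --- whereas the Koszul-plus-Depth-Lemma portion of the argument is essentially bookkeeping once the Koszul shifts are controlled by $l d_1$ and $l d_2$.
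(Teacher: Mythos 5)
Your proposal follows essentially the same route as the paper: resolve $R$ by the Koszul complex on $f_1,\ldots,f_l$, apply $\Delta$, bound the shifts by $ld_1$ and $ld_2$ so that Lemma~\ref{CM_criteria} and the Depth Lemma give $\depth R_\Delta \ge n+m-l-1$, and then match this with $\dim R_\Delta = n+m-l-1$. The only difference is in how the dimension count is justified --- the paper cites \cite[Section 2.2, Proposition 2.3]{SimisTrungValla} for case (1) (the associated-primes hypothesis makes the relative dimension equal the Krull dimension, which is exactly the content of your prime-avoidance/Hilbert-polynomial sketch) and \cite[Lemma 2.7]{CHTV} for the upper bound in case (2) where you invoke the Segre product dimension formula --- so the step you flag as the main obstacle is precisely the point the paper outsources to the literature.
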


\begin{proof}
We have that $R$ is defined by a complete intersection. Hence the Koszul complex on $\{f_1, \ldots, f_{\ell} \}$ resolves $R$. Applying $\Delta$ to the resolution of $R$ and then using \cite[Corollary 18.6]{Eisenbud_text} and Lemma  \ref{CM_criteria} gives depth($R_{\Delta}) \geq m+n-\ell-1$.
 \begin{enumerate}
  \item Assume for $\Delta=(c,e)$, $R$ satisfies the property that all its associated primes do not contain $S_{(c,0)}$ and $S_{(0,e)}$. Then the relative dimension of $R$ coincides with its Krull dimension \cite[Section 2.2]{SimisTrungValla}. Hence from the idea similar to \cite[Proposition 2.3]{SimisTrungValla}, it can be concluded that dim($R_{\Delta})=m+n-\ell-1$. 
  \item  Let $R=R_1 \otimes_{K}R_2$ with $\text{dim}R=\text{dim} R_1+\text{dim} R_2$. Then from \cite[Lemma 2.7]{CHTV}, dim$(R_{\Delta}) \leq m+n-\ell-1$ and from the lower bound on the depth of $R$ we obtain, dim$(R_{\Delta})=m+n-\ell-1$. 
 \end{enumerate}
\end{proof}

\vspace{1mm}

As an application of the previous results, we have the following observations.

\begin{thm}\label{tridiagonal3}
Let $I=Pf_{n-1}(X_3)$. Let $c,e$ be positive integers. Then,
\begin{enumerate}[(a)]
    \item $\mathcal{R}(I)_{\Delta}$ is Koszul for all $\Delta=(c,e)$. 
    \item $\mathcal{R}(I)_{\Delta}$ is Cohen-Macaulay for all $\Delta=(c,e)$.
\end{enumerate}
\end{thm}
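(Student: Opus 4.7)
The plan is to derive both parts as direct consequences of Theorem~\ref{tridiagonal} and the general machinery already assembled in the preliminary section, so essentially no new computation is required.

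For part~(a), I would simply appeal to the fact established in Theorem~\ref{tridiagonal} that $\mathcal{R}(I)$ is Koszul as a standard bigraded $K$-algebra (indeed, its defining ideal is generated by a Gr\"obner basis of quadrics, which is even stronger). Koszulness of a bigraded $K$-algebra is inherited by all of its diagonals by Remark~\ref{Koszulness_linear}(ii). This settles (a) without any further work.

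For part~(b), the strategy is to invoke Proposition~\ref{CM_diagonals}(1). Recall from the proof of Theorem~\ref{tridiagonal} that $\mathcal{R}(I)\cong S/J$, where
\[
S=K[x_{12},x_{23},\ldots,x_{n-1\,n},\,y_1,\ldots,y_{r+1}]
\]
with $n-1=2r$ variables of $x$-degree~$1$ and $r+1$ variables of $y$-degree~$1$, and $J$ is generated by the $r$ binomials $x_{i\,i+1}y_j-x_{i+1\,i+2}y_{j+1}$, each of bidegree $(1,1)$, which form a regular sequence. Hence in the notation of Proposition~\ref{CM_diagonals} we have $n'=2r$, $m=r+1$, $l=r$ and $d_1=d_2=1$, so the bounds
\[
c\ge -n'+ld_1=-r\qquad\text{and}\qquad e\ge -m+ld_2=-1
\]
are automatically satisfied for every positive pair $(c,e)$. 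Since $\mathcal{R}(I)=\bigoplus_{n\ge 0}I^n$ sits inside the polynomial ring $B[t]$ and is therefore a domain, the hypothesis of Proposition~\ref{CM_diagonals}(1) applies, and we conclude that $\mathcal{R}(I)_\Delta$ is Cohen--Macaulay for every $\Delta=(c,e)$ with $c,e>0$.

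There is really no genuine obstacle here: the only point to verify with any care is the counting of variables and defining relations of $\mathcal{R}(I)$ so that the numerical bounds of Proposition~\ref{CM_diagonals} turn out to be vacuous in the positive range, and that $\mathcal{R}(I)$ is a domain so that part~(1) (rather than part~(2)) of that proposition is available. Both are immediate from the description of the defining ideal in Theorem~\ref{tridiagonal} and from the fact that any Rees algebra of an ideal in a polynomial ring is a domain.
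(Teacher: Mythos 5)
Your proof is correct and takes essentially the same route as the paper: part (a) follows from the Koszulness of $\mathcal{R}(I)$ established in Theorem~\ref{tridiagonal} together with Remark~\ref{Koszulness_linear}(ii), and part (b) follows by applying Proposition~\ref{CM_diagonals}(1) to the complete intersection domain $\mathcal{R}(I)$. Your explicit check that the numerical bounds $c\ge -2r+r$ and $e\ge -(r+1)+r$ are vacuous for positive $(c,e)$ is a detail the paper leaves implicit, but it is the right verification to make.
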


\begin{proof}
Since $\mathcal{R}(I)$ is Koszul, Koszulness of $\mathcal{R}(I)_{\Delta}$ can be seen as a consequence of Remark \ref{Koszulness_linear} (ii).

\vspace{1mm}

We have that $\mathcal{R}(I)$ is a complete intersection domain. Hence the Cohen-Macaulayness of all its diagonals follow from Proposition \ref{CM_diagonals}
\end{proof}

\begin{rem}
\text{  }
\begin{enumerate}

    \item Let $I=$Pf$_{n-1}(X_3)$, $\Delta=(1,1)$ and $T=(t_{ij}), \, 1 \leq i \leq n-1, \, 1 \leq j \leq r+1$. Then $$\mathcal{R}(I)_{\Delta} \cong K[T]/I_2(T)+ \langle t_{ij}-t_{i+1j+1}: \, i=2k+1, \, 0 \leq k \leq r-1, \, j=\frac{i+1}{2} \rangle.$$

    \item In general, Pfaffian ideals need not be generated by monomials or binomials. For instance, for generic skew-symmetric matrices of order greater than $3$, the generators are seen to be neither monomials nor binomials. But, if the Pfaffian ideal is a squarefree monomial ideal, then it corresponds to a simplicial complex $\Delta'$, where the Pfaffian ideal can be identified as the Stanley Reisner ideal of $\Delta'$
    (\cite{Sturmfels}). 
    \item Theorem \ref{tridiagonal} is a special case where the Pfaffian ideal can be viewed as the vertex cover ideal of an unmixed bipartite graph.
    Note that the correspondence between skew-symmetric matrices and Pfaffian ideals is not unique.
For example, let $\footnotesize{X= \begin{bmatrix}
                          0      & x_{12} &   0    &   0      &0 \\
                       -x_{12}   &   0    & x_{23} &   x_{24} &0 \\
                          0      &-x_{23} &   0    & x_{34}   &0 \\
                          0      &-x_{24} &-x_{34} &   0      &x_{45}\\
                          0      &   0    &    0   & -x_{45}  & 0  \\
                       \end{bmatrix}_{5 \times 5}}$.
Then the maximal order Pfaffian ideal of the matrix $X$ coincides with Pf$_{4}(X_3)$.

\end{enumerate}
\end{rem}

\vspace{2mm}

Another form of a sparse skew-symmetric matrix that we have considered is the following.

\vspace{1mm}

\begin{prop} \label{pf_case2}
Let $n=2r+1, \, r \in \NN \cup \{ 0\} $ and let $X_4=\begin{bmatrix}
     O & A \\
     -A^T & C \\
     \end{bmatrix}$,
$O$ is a zero block matrix,
\[
A=\begin{bmatrix}
              x_{1\;r+2} & x_{1\;r+3} &\ldots & x_{1\;n-1}     &x_{1\;n}\\
              x_{2\;r+2} & x_{2\; r+3}  &\ldots & x_{2\; n-1}    &x_{2\;n}\\
              \vdots &\vdots &\vdots &\vdots &\vdots \\
              x_{r+1\;r+2} & x_{r+1\;r+3} &\ldots & x_{r+1\;n-1}   &x_{r+1\;n}\\
              \end{bmatrix}, \; \text{ and } \;
C=\begin{bmatrix}
               0 & x_{r+2\;r+3}  &\ldots  &x_{r+2\; n-1}       &x_{r+2\; n}\\
               -x_{r+2\;r+3} & 0 & \ldots  &x_{r+3\;n-1}       &x_{r+3\;n}\\
               \vdots &\vdots &\vdots & \vdots &\vdots \\
               -x_{r+2\;n} & -x_{r+3\;n} & \ldots     &-x_{n-1\;n}  &0 \\
               \end{bmatrix},
 \] 
 
 \vspace{1mm}
\noindent where $A$ is an $(r+1) \times r$ matrix and $C$ is a skew-symmetric matrix of order $r \times r$. Then,
\begin{enumerate}
    \item $I=Pf_{n-1}(X_4)$ is generated by an unconditioned $d$-sequence.
    \item $\mathcal{R}(I)$ is Koszul and Cohen-Macaulay.
    \item $I^j$ has a linear resolution for all $j \geq 1$.
\end{enumerate}

\end{prop}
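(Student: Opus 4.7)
The plan is to identify $I$ with the extension to $K[X_4]$ of the ideal of maximal minors of the generic $(r+1)\times r$ matrix $A$, and then invoke the classical Hilbert--Burch theory for such ideals. This reduction is the substantive step; once it is in hand, all three assertions follow from standard results on determinantal ideals in the Hilbert--Burch case.

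First I would compute the Pfaffians $\operatorname{Pf}_{\bar l}(X_4)$ using the block structure of $X_4$. For $l>r+1$, deleting the $l$-th row and column leaves a $2r\times 2r$ skew-symmetric matrix whose upper-left $(r+1)\times(r+1)$ principal block is still zero; in any perfect matching of $\{1,\dots,2r\}$ contributing a nonzero term to the Pfaffian expansion each of the $r+1$ indices in the zero block would have to be paired with an index outside this set, but only $r-1$ such indices are available, which is combinatorially impossible, so $\operatorname{Pf}_{\bar l}(X_4)=0$. For $l\le r+1$, the submatrix has the block form $\bigl[\begin{smallmatrix}0 & A_{\hat l}\\ -A_{\hat l}^{T} & B\end{smallmatrix}\bigr]$ with $A_{\hat l}$ the $r\times r$ matrix obtained by deleting row $l$ of $A$; the same matching argument forces every contributing matching to pair each of the $r$ indices of the zero block with one of the $r$ indices indexing $B$, so no monomial of the Pfaffian expansion involves an entry of $B$, and a direct small-case verification fixes the sign so that $\operatorname{Pf}_{\bar l}(X_4)=\pm\det(A_{\hat l})$. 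Consequently $I=I_r(A)\cdot K[X_4]$, where $I_r(A)$ denotes the ideal of maximal minors of the generic matrix $A$.

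Since $K[X_4]$ is a polynomial extension of $K[A]$ by adjoining the entries of $B$, the inclusion $K[A]\hookrightarrow K[X_4]$ is faithfully flat, and so preserves Cohen--Macaulayness, Koszulness of the Rees algebra, linear resolutions of powers, and the (unconditioned) $d$-sequence property of an ideal sitting inside $K[A]$. It therefore suffices to establish (1)--(3) for $I_r(A)\subset K[A]$. The Hilbert--Burch complex
\[0\longrightarrow K[A]^{r}(-(r{+}1))\longrightarrow K[A]^{r+1}(-r)\longrightarrow K[A]\longrightarrow K[A]/I_r(A)\longrightarrow 0\]
is linear, yielding (3) for $j=1$. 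Classically, $I_r(A)$ is of linear type with $\mathcal{R}(I_r(A))\cong K[A,Y]/\bigl\langle \sum_{i=1}^{r+1} a_{ij} y_i : 1\le j\le r\bigr\rangle$, a quotient by $r$ bihomogeneous forms of bidegree $(1,1)$; applying Lemma \ref{regular_seq} with a suitable graded lexicographic order (in the spirit of Lemma \ref{technical_lemma}(1)) shows these $r$ forms are a regular sequence, so $\mathcal{R}(I_r(A))$ is a complete intersection of quadrics, hence Cohen--Macaulay and Koszul. Remark \ref{Koszulness_linear}(iii) then upgrades (3) to all $j\ge 1$.

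For part (1), the unconditioned $d$-sequence property of the generators of $I_r(A)$ is classical for maximal minors of a generic $(r+1)\times r$ matrix, and can alternatively be verified directly from the Hilbert--Burch presentation by checking the requisite colon conditions in the same spirit as Lemma \ref{technical_lemma}. The main obstacle in this approach is the Pfaffian calculation of the first step, in particular the combinatorial argument that entries of $B$ never appear in $\operatorname{Pf}_{\bar l}(X_4)$ for $l\le r+1$ and that the Pfaffians vanish identically for $l>r+1$; once the identification $I=I_r(A)K[X_4]$ is in place, the remainder is a transparent application of Hilbert--Burch theory together with the characterization of a standard graded complete intersection of quadrics as a Koszul algebra.
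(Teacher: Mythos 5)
Your proposal is correct and follows essentially the same route as the paper: identify the nonzero maximal Pfaffians with the maximal minors of the block $A$, invoke Huneke's results for the unconditioned $d$-sequence property and the linear-type presentation of the Rees algebra, verify via Lemma \ref{regular_seq} that the $r$ bidegree-$(1,1)$ relations form a regular sequence (hence a complete intersection of quadrics, so Koszul and Cohen--Macaulay), and deduce linearity of powers from Remark \ref{Koszulness_linear}(iii). The only difference is that you spell out the perfect-matching argument showing the entries of $B$ do not appear in the Pfaffians, a step the paper asserts without proof.
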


\begin{proof}
\noindent Let $B=K[X_4]$. Let 
$X_4^{(i)}=\begin{bmatrix}

O^{(i)} & A^{(i)} \\
-(A^{(i)})^{T} & C^{(i)}  \\
\end{bmatrix}$ 
where $X_4^{(i)}$ is the submatrix of $X_4$ obtained by removing its $i\text{-th}$ row and the $i\text{-th}$ column. Since the Pfaffians of $X_4$ are the square roots of the determinants of $X_4^{(i)}$ for $i=1, \ldots, n$, it is clear that $C$ does not contribute to the Pfaffians of $X_4$. Thus, the generators of the Pfaffian ideal $I=$Pf$_{n-1}(X_4)$ is given by the $r$-minors of the first $r+1$ rows and the last $ (n-r-1)=r$ columns of $X_4$, which is $r+1 \choose r$ in number ($i.e.,$ contributed by the $r$-minors of the matrix $A$).  
\begin{enumerate}
    \item From \cite[Proposition 1.1]{CH1}, the Pfaffians form an unconditioned $d$-sequence.
    \item From example $1.2$ in \cite{CH1}, it follows that $\mathcal{R}(I) \cong S/J$ where
    $S=K[X_4,Y]$ with $Y=\begin{bmatrix}
   y_1 \cdots y_{r+1} \\
   \end{bmatrix}$ and the defining ideal $J$ has the following form, 
$$J=\langle g_i, i=1, \ldots, r \rangle  \text{ where } g_i= \sum_{k=1}^{r+1}(-1)^{k+1}x_{k \;n-(i-1)}y_k, \; i=1, \dots r.$$ Consider the following order on the indeterminates of $S$, $x_{1 \; n} > x_{2 \; n-1} > \cdots > x_{r \; n-(r-1)}$ followed by the other indeterminates, with the term order being graded lexicographic. Then from \cite[Lemma 2.2]{reg-seq}, $J$ is a complete intersection of quadrics. Thus being the complete intersection rings defined by quadrics, $\mathcal{R}(I)$ is Koszul and Cohen-Macaulay.
\item Since $\mathcal{R}(I)$ is Koszul, reg$_B(I^j)=rj$ for $j \geq 1$ follows from  Remark \ref{Koszulness_linear} (iii).
 
\end{enumerate}

\end{proof}

\begin{thm} \label{pf_case2.2}
Let $I=Pf_{n-1}(X_4)$. Then $\mathcal{R}(I)_{\Delta}$ is Koszul and Cohen-Macaulay for all $\Delta=(c,e)$, $c,e >0$.
\end{thm}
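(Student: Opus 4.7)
The statement is a direct consequence of structural results already obtained for $\mathcal{R}(I)$ in Proposition \ref{pf_case2}, combined with the general machinery of diagonals developed earlier (Remark \ref{Koszulness_linear} and Proposition \ref{CM_diagonals}). My plan is to handle the two assertions separately, reducing each to a hypothesis already verified for $\mathcal{R}(I)$.

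\noindent For the Koszul part, I would invoke Proposition \ref{pf_case2}(2), which tells us that $\mathcal{R}(I) \cong S/J$ with $J$ a complete intersection of quadrics; in particular $\mathcal{R}(I)$ is a Koszul $K$-algebra (with respect to total degree). Then Remark \ref{Koszulness_linear}(ii) immediately yields that $\mathcal{R}(I)_{\Delta}$ is Koszul for every $\Delta = (c,e)$ with $c,e > 0$. No further computation is needed for this half.

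\noindent For the Cohen--Macaulay part, the plan is to apply Proposition \ref{CM_diagonals}(1) to the presentation $\mathcal{R}(I) = S/J$ given in Proposition \ref{pf_case2}, where $S = K[X_4, Y]$ with $Y = [y_1, \ldots, y_{r+1}]$, and $J = \langle g_1, \ldots, g_r \rangle$ is generated by the regular sequence of bilinear forms $g_i = \sum_{k=1}^{r+1}(-1)^{k+1} x_{k\; n-(i-1)} y_k$. Here each $g_i$ has bidegree $(1,1)$, so in the notation of Proposition \ref{CM_diagonals} we have $l = r$ and $d_1 = d_2 = 1$. Letting $N_x$ denote the number of $x$-indeterminates in $S$ (namely $r(r+1) + \binom{r}{2}$) and $N_y = r+1$, the bounds $c \geq -N_x + r$ and $e \geq -N_y + r = -1$ are satisfied by every pair of positive integers $(c,e)$. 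Since $\mathcal{R}(I) \subseteq B[t]$ is a domain, the associated-prime hypothesis of Proposition \ref{CM_diagonals}(1) holds automatically, so the proposition gives $\mathcal{R}(I)_{\Delta}$ Cohen--Macaulay for all such $\Delta$.

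\noindent The main technical point is simply the verification that the numerical bounds from Proposition \ref{CM_diagonals} are trivially met in this setting, which is why I do not expect any real obstacle: the heavy lifting (the complete intersection structure and Koszulness of $\mathcal{R}(I)$, and the general Cohen--Macaulay criterion for diagonals of complete intersections) has been done in Proposition \ref{pf_case2} and Proposition \ref{CM_diagonals} respectively. Thus the proof essentially reduces to citing these two results after recording that $\mathcal{R}(I)$ is a domain.
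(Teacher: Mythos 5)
Your proposal is correct and follows essentially the same route as the paper: Koszulness via Proposition \ref{pf_case2} together with Remark \ref{Koszulness_linear}(ii), and Cohen--Macaulayness via Proposition \ref{CM_diagonals} applied to the complete intersection domain $\mathcal{R}(I)$. Your explicit check that the numerical bounds $c \geq -N_x + r$ and $e \geq -N_y + r$ are vacuous for positive $c,e$ is a detail the paper leaves implicit, but it is the same argument.
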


\begin{proof}
 Koszulness of $\mathcal{R}(I)_{\Delta}$ for all $\Delta$ follows from Proposition \ref{pf_case2} and Remark \ref{Koszulness_linear} (ii). 
 Since $\mathcal{R}(I)$ is a complete intersection domain, $\mathcal{R}(I)_{\Delta}$ is Cohen-Macaulay for all $\Delta$ from Proposition \ref{CM_diagonals}. 
\end{proof}

\vspace{2mm}

 \noindent {\bf Acknowledgement.} The first author is partially supported by the MATRICS grant, SERB India. The second author is financially supported by the INSPIRE fellowship, DST, India. The authors are also grateful to the anonymous referee for several suggestions which have helped improve the presentation of the article.

\vspace{3mm}


\begin{thebibliography}{99}
\addcontentsline{toc}{chapter}{References}


\bibitem{AKM} H.~Ananthnarayan, N.~Kumar, and V.~Mukundan: Diagonal Subalgebras of Residual Intersections. \emph{Proc. Amer. Math. Soc.}, {\bf 148} (2020), no.~1, 41--52.

\bibitem{ACN} A.~Aramova, K.~Crona, and E.~De Negri: Bigeneric initial ideals, diagonal subalgebras and bigraded {H}ilbert functions. \emph{J. Pure Appl. Algebra}, {\bf 150} (2000), 215--235.


\bibitem{Artin} E.~Artin: Geometric Algebra. \emph{Interscience Publishers, Inc., New York-London}, (1957), x+214.


\bibitem{Backelin} J. Backelin: On the rates of growth of the homologies of {V}eronese subrings. \emph{Springer, Berlin},  {\bf 1183} (1986), 79--100.

\bibitem{Baetica} C. Baetica: Pfaffian ideals of linear type. \emph{Comm. Algebra}, {\bf 27} (1999), no.~8, 3909--3920.

\bibitem{SB} S.~Blum: Subalgebras of Bigraded Koszul Algebras.  \emph{J. Algebra},  {\bf 242} (2001), no.~2, 795--809.

\bibitem{Rees_linear_presented} J. A. Boswell and V. Mukundan: Rees algebras and almost linearly presented ideals.  \emph{J. Algebra},  {\bf 460} (2016), no.~2, 102--127.


\bibitem{BCV1} W. Bruns, A. Conca, M. Varbaro: Relations between the minors of a generic matrix. \emph{Adv. Math.} {\bf 244 } (2013), 171--206.

\bibitem{BCV2} W. Bruns, A. Conca, M. Varbaro: Maximal minors and linear powers. \emph{J. Reine Angew. Math.}, {\bf 702 } (2015), 41--53.


\bibitem{x-reg} W. Bruns, A. Conca, M. Varbaro: Castelnuovo-{M}umford regularity and powers. \emph{Commutative algebra}, {\bf 350 } (2021), 147--158.


\bibitem{DBDE} D.~Buchsbaum, D.~Eisenbud: Algebra structures for finite free resolutions, and some structure theorems for ideals of codimension 3. \emph{Amer. J. Math.}, {\bf 99} (1977), no.~3, 447--485. 

\bibitem{CC} G. Caviglia, A. Conca: Koszul property of projections of the {V}eronese cubic surface \emph{Adv. Math.}, {\bf 234} (2013), 404--413.


\bibitem{CN} A.~Conca, E.~De Negri: {$M$}-sequences, graph ideals, and ladder ideals of linear type. \emph{J. Algebra}, {\bf 211}, no. 2, (1999), 299--624.


\bibitem{CNR} A.~Conca, E.~De Negri, M.~E.~Rossi: Koszul Algebras and Regularity. Commutative Algebra: Expository Papers Dedicated to David Eisenbud on the Occasion of His 65th Birthday, \emph{ Springer, New York.}, (2013), 285--315.
 
\bibitem{CHTV} {A.~Conca, J.~Herzog, N.~V.~Trung, G.~Valla}: Diagonal subalgebras of bigraded algebras and embeddings of blow-ups of projective spaces. \emph{Amer. J. Math.},  {\bf 119} (1997), no.~4, 859--901. 

\bibitem{Cutkosky&Herzog&Trung} S. D. Cutkosky, J. Herzog, N. V. Trung: Asymptotic behaviour of the {C}astelnuovo-{M}umford regularity. \emph{Compositio Math.},
{\bf 118} (1999), no.~3, 243--261.




\bibitem{pfaffian_rings} C. D. Concini, C. Procesi: A characteristic-free approach to invariant theory. \emph{Advances in Math.}, { \bf 3 } (1976), 330--354.

\bibitem{Price&Cooper} M. Cooper, E.F Price III: Bounding the degrees of the defining equations of Rees rings for certain determinantal and Pfaffian ideals. \emph{J. Algebra.} (2022).

\bibitem{Eisenbud_text} D.~Eisenbud: Commutative Algebra with a view toward algebraic geometry. Graduate Texts in Mathematics, {\bf 150}, \textit{Springer-Verlag, New York}, (1995), xvi+785 pp. 


\bibitem{DECH} D. Eisenbud, C. Huneke : Cohen-{M}acaulay {R}ees algebras and their specialization \emph{J. Algebra}, {\bf 81} (1983), no.~1, 202--224.

\bibitem{Fr} {R.~Fr\"oberg}: {K}oszul algebras, Advances in commutative ring theory (Fez, 1997), Lecture Notes in Pure and Applied Mathematics, 205, \emph{Dekker, New York.}, (1999), 337--350.

\bibitem{powers} I. Gitler, E. Reyes, R.H. Villarreal: Blowup algebras of ideals of vertex covers of bipartite
graphs, {\bf 376}, \emph{Contemp. Math.}, (2005), 273--279.

\bibitem{M2} D.R. Grayson, M.E. Stillman: Macaulay2, a software system for research in algebraic geometry, Available at \url{http://www.math.uiuc.edu/Macaulay2/}.

\bibitem{HerSimVas} J. Herzog, A. Simis, W. V. Vasconcelos: Koszul homology and blowing-up rings.  \emph{Lecture Notes in Pure and Appl. Math.},  {\bf 84} (1983), 79--169.

\bibitem{CH} {C.~Huneke}: On the symmetric and Rees algebra of an ideal generated by a d-sequence.  \emph{J. Algebra},  {\bf 62} (1980), no.~2, 268--275.


\bibitem{CH1} {C.~Huneke}: The theory of d-sequences and powers of ideals, \emph{Adv. in Math.}, {\bf 46} (1982), no~3, 249--279. 


\bibitem{CH2} {C.~Huneke}: Powers of ideals generated by weak {$d$}-sequences, \emph{J. Algebra}, {\bf 68} (1981), no~2, 471--509. 


\bibitem{a.c.i} {A.V Jayanthan, A. Kumar, R. Sarkar}: Almost complete intersection binomial edge ideals and their {R}ees algebras.  \emph{J. Pure Appl. Algebra},  {\bf 225} (2021), no.~6, Paper No. 106628, 19.

\bibitem{TJPP} T.~J\'ozefiak, P.~Pragacz : Ideals generated by Pfaffians. \emph{J. Algebra}, {\bf 61} (1979), no.~1, 189--198.

\bibitem{perfectness} H. Kleppe, D. Laksov: The generic perfectness of determinantal schemes.  \emph{Algebraic geometry ({P}roc. {S}ummer {M}eeting, {U}niv. {C}openhagen, {C}openhagen, 1978)} Lect. Notes Math. Springer, {\bf 732 } (1979), 244--252.

\bibitem{KSSW} K. Kurano, E. Sato, A. K. Singh, K., K. Watanabe: Multigraded rings, diagonal subalgebras, and rational singularities. \emph{J. Algebra}, {\bf 322 } (2009), no.~1, 3248--3267.


\bibitem{Neeraj} N.~Kumar: {K}oszul property of diagonal subalgebras. \emph{J. Commut. Algebra}, {\bf 6} (2014), no.~3, 385--406.

\bibitem{NK_CV} N. ~Kumar, C. Venugopal: A Note on sequences and y-regularity of Rees algebra. arXiv:$2303.00350$v$2$.


\bibitem{KPU} A.R. Kustin, C. Polini, B. Ulrich: The equations defining blowup algebras of height three {G}orenstein ideals.  \emph{Algebra Number Theory}, { \bf 11 } (2017), no. ~7, 1489--1525.

\bibitem{Olga} O. Lavila-Vidal: On the {C}ohen-{M}acaulay property of diagonal subalgebras of the {R}ees algebra. \emph{Manuscripta Math.},  { \bf 95} (1998), no. 1, 47--58.

\bibitem{Marinov} V. P. Marinov: Perfection of ideals generated by the Pfaffians of an alternating matrix, II \emph{Serdica}, {\bf 9 } (1983), 122--131.

\bibitem{Sturmfels} E. Miller, B. Sturmfels: Combinatorial commutative algebra., Vol. \emph{Springer-Verlag, New York.}, (2005), xiv+417. 



\bibitem{Morey} S. Morey: Equations of blowups of ideals of codimension two and three. \emph{J. Pure Appl. Algebra}, { \bf 109 } (1996), no. ~2, 197--211.

\bibitem{MoreyUlrich} S. Morey, B. Ulrich: Rees algebras of ideals with low codimension. \emph{Proc. Amer. Math. Soc.}, { \bf 124 } (1996), no. ~12, 3653--3661.

\bibitem{Rom1} T. R\"omer: Homological properties of bigraded algebras. \emph{Illinois J. Math.}, {\bf 45} (2001), no.~4, 1361--1376.

\bibitem{reg-seq} J. Saha, I. Sengupta, G. Tripathi: Primary decomposition and normality
of certain determinantal ideals. \emph{Proc. Indian Acad. Sci. Math. Sci.}, {\bf 129} (2019), no.~4, 1--10.


\bibitem{SPOP} Seyed Fakhari, S. A: Symbolic powers of cover ideal of very well-covered and bipartite graphs  \emph{Proc. Amer. Math. Soc.}, {\bf 146} (2018), no.~1, 97--110.


\bibitem{SimisTrungValla} A.~Simis, N.~V.~Trung, G.~Valla: The diagonal subalgebra of a blow-up algebra. \emph{J. Pure Appl. Algebra},  {\bf 125} (1998), no.~1-3, 305--328.


\bibitem{D.Taylor} D. Taylor: Ideals generated by monomials in an $R$-sequence, Ph.D. thesis, \emph{University of Chicago} (1966).


\bibitem{Zheng} X. Zheng: Resolutions of facet ideals. \emph{Comm. Algebra},  {\bf 32} (2004), no.~6, 2301--2324.





 


 
 

 
 

















 
\end{thebibliography}
\end{document}